\pgfplotsset{compat=newest}
\newcommand{\cdummy}{\cdot}
\newcommand{\bb}[1]{\mathbb{#1}}
\newcommand{\mbf}[1]{\boldsymbol{#1}}
\newcommand{\Th}{\mathcal{T}_h}
\newcommand{\assign}{:=}
\newcommand{\tmop}[1]{\text{#1}}
\newtheorem{lemma}{Lemma}
\newtheorem{theorem}{Theorem}
\newtheorem{remark}{Remark}
\newtheorem{assumption}{Assumption}
\newcommand{\logLogSlopeTriangle}[5]
{

    \pgfplotsextra
    {
        \pgfkeysgetvalue{/pgfplots/xmin}{\xmin}
        \pgfkeysgetvalue{/pgfplots/xmax}{\xmax}
        \pgfkeysgetvalue{/pgfplots/ymin}{\ymin}
        \pgfkeysgetvalue{/pgfplots/ymax}{\ymax}

        \pgfmathsetmacro{\xArel}{#1}
        \pgfmathsetmacro{\yArel}{#3}
        \pgfmathsetmacro{\xBrel}{#1-#2}
        \pgfmathsetmacro{\yBrel}{\yArel}
        \pgfmathsetmacro{\xCrel}{\xArel}

        \pgfmathsetmacro{\lnxB}{\xmin*(1-(#1-#2))+\xmax*(#1-#2)} 
        \pgfmathsetmacro{\lnxA}{\xmin*(1-#1)+\xmax*#1} 
        \pgfmathsetmacro{\lnyA}{\ymin*(1-#3)+\ymax*#3} 
        \pgfmathsetmacro{\lnyC}{\lnyA+#4*(\lnxA-\lnxB)}
        \pgfmathsetmacro{\yCrel}{\lnyC-\ymin)/(\ymax-\ymin)} 
        
        \coordinate (A) at (rel axis cs:\xArel,\yArel);
        \coordinate (B) at (rel axis cs:\xBrel,\yBrel);
        \coordinate (C) at (rel axis cs:\xCrel,\yCrel);

        \draw[#5]   (A)-- node[pos=0.5,anchor=north] {1}
                    (B)-- 
                    (C)-- node[pos=0.5,anchor=west] {#4}
                    cycle;
    }
}
        \title{$\phi$-FEM for the heat equation: optimal convergence on unfitted meshes in space}
		\author{Michel Duprez\footnote{MIMESIS team, Inria Nancy - Grand Est, MLMS team, Universit\'e de Strasbourg, 1 place de l'h\^opital, 67000 Strasbourg, France. \texttt{michel.duprez@inria.fr}} 
		and Vanessa Lleras\footnote{IMAG, Univ Montpellier, CNRS UMR 5149, 499-554 Rue du Truel, 34090 Montpellier, France. \texttt{vanessa.lleras@umontpellier.fr}}
		and Alexei Lozinski\footnote{Universit\'e de Franche-Comt\'e, Laboratoire de math\'{e}matiques de Besan\c{c}on, UMR~CNRS~6623, 16 route de Gray, 25030 Besan\c{c}on Cedex, France. \texttt{alexei.lozinski@univ-fcomte.fr}}
		and Killian Vuillemot\footnote{MIMESIS team, Inria Nancy - Grand Est, MLMS team, Universit\'e de Strasbourg, 1 place de l'h\^opital, 67000 Strasbourg, France. IMAG, Univ Montpellier, CNRS UMR 5149, 499-554 Rue du Truel, 34090 Montpellier, France. \texttt{killian.vuillemot@umontpellier.fr}}}
\date{\today}
\begin{document}
\maketitle

	\begin{abstract}
			Thanks to a finite element method, we solve numerically parabolic partial differential equations on complex domains by avoiding the mesh generation, using a regular background mesh, not fitting the domain and its real boundary exactly. Our technique follows the $\phi$-FEM paradigm, which supposes that the domain is given by a level-set function. In this paper, we prove \textit{a priori} error estimates in $l^2(H^1)$ and $l^\infty(L^2)$ norms for an implicit Euler discretization in time. We give numerical illustrations to highlight the performances of $\phi$-FEM, which combines optimal convergence accuracy, easy implementation process and fastness. 
		\end{abstract}

	\section{Introduction}
	The classical finite element method  for elliptic and parabolic problems (see e.g. \cite{Thomee}) needs a computational mesh fitting the boundary of the physical domain. In some applications in engineering or bio-mechanics, the construction of such meshes may be very time-consuming or even impossible. 
	Alternative approaches, such as Fictitious Domain \cite{glowinski} or Immersed Boundary Methods (IBM)	(see e.g. \cite{IBMrev} for a review), can work on unfitted meshes but are usually not very precise. More recent variants, such as  CutFEM \cite{NitscheXFEMinterface}, demonstrate optimal convergence orders but are less straightforward to implement than the original IBM. In particular, CutFEM needs special quadrature rules on the cells cut by the boundary. Finally, we can also mention the Shifted Boundary Method \cite{sbm} that avoids the non-trivial integration by introducing a boundary correction based on a Taylor expansion.%

A new Finite Element Method on unfitted meshes, named $\phi$-FEM, combining the optimal convergence and the ease of implementation,  was recently proposed in \cite{phifem,phifem2}. Initially developed for stationary elliptic PDEs, it has been extended in \cite{cotin:hal-03372733} to a broader class of equations, including the time-dependent parabolic problems, without any theoretical analysis. The goal of the present note is to provide such an analysis in the case of the Heat-Dirichlet problem
\begin{equation}
\label{eq:heat} 
\partial_tu - \Delta u = f  \text{ in } \Omega \times (0, T),~~
u = 0  \text{ on } \Gamma \times (0, T), ~~
u_{|t=0} = u^0  \text{ in } \Omega, 
\end{equation}
where $T>0$, $\Omega\subset \bb{R}^d$, $d = 2,3$  is a bounded domain  with a smooth boundary $\Gamma$  given by a level-set function on $\bb{R}^d$
\begin{equation}\label{eq:phi} \Omega \assign \{\phi < 0\}  \hspace{0.5cm} \text{ and } \hspace{0.5cm} \Gamma \assign \{\phi = 0\}
\hspace{0.17em} . \end{equation} 
(Note that some FEM on unfitted meshes have been developed for such problems for example, in \cite{barad,Colella}).

For the discretization in time, we use the implicit Euler scheme. The Dirichlet boundary conditions are imposed via a product with the level-set function $\phi$. An appropriate stabilization is introduced to the finite element discretization to obtain well-posed problems. A somewhat unexpected feature of this stabilization is that it works under the constraint on the steps in time and space of the type $\Delta t \geqslant ch^2$. This does not affect the practical interest of the scheme since it is normally intended to be used in the regime $\Delta t \sim h$. 
We shall provide \textit{a priori} error estimates for this scheme in $l^2(H^1)$ norms of similar orders as for the standard FEM, cf. \cite{Thomee}.
We also study the $l^\infty(L^2)$ convergence and prove a slightly suboptimal theoretical bound for it, while it turns out to be optimal numerically. 

\section{Definitions, assumptions, description of the scheme and the main result.}

We assume that $\Omega$ lies inside a box $\mathcal{O}\subset\bb{R}^d$
and that $\Omega$ and $\Gamma$ are given by \eqref{eq:phi}.
The box $\mathcal{O}$ is covered by a simple quasi-uniform simplicial (typically Cartesian) background mesh denoted by $\Th^{\mathcal{O}}$. We introduce the active computational mesh $ \Th \assign \left\{ T \in \Th^{\mathcal{O}} : T \cap \{\phi_h < 0\} \neq
\emptyset \right\}$ on $\Omega_h = \left( \cup_{T \in
	\Th} T \right)^o$, the subdomain of $\mathcal{O}$ composed of mesh cells intersecting $\Omega$, cf. Fig.~\ref{fig:domains_circles} (right). Here,  $\phi_h$ is a piecewise polynomial  interpolation of $\phi$ in finite element space of degree $l\in\mathbb{N}^*$ on $\Th^{\mathcal{O}}$. We shall also need a submesh $\Th^{\Gamma}$, containing the elements of $\Th$ that
are cut by the approximate boundary $\Gamma_h \assign \{\phi_h = 0\}$:
$ \Th^{\Gamma} = \{T \in \Th : T \cap \Gamma_h \neq \emptyset\}$. 
Finally, we denote by $\mathcal{F}_h^{\Gamma}$ the set of the
internal facets $E$ of mesh  $\Th$ belonging to the cells of the set $\Th^{\Gamma}$,
$ \mathcal{F}_h^{\Gamma} \assign \{E \text{ (internal facet of $\Th$) such
that } \exists \hspace{0.17em} T \hspace{0.17em} \in \Th : T \cap \Gamma_h
\neq \emptyset \hspace{0.17em} \text{and } \hspace{0.17em} E \in \partial
T\}$.

Introduce a uniform partition of $[0, T]$ into time steps $0 = t_0 < t_1
< \ldots < t_N = T$ with $t_n=n\Delta t$. 
The basic idea of $\phi$-FEM is to introduce the new unknown $w=w(x,t)$ and to set $u=\phi w$ so that the Dirichlet condition $u=0$ is automatically satisfied on $\Gamma$ since $\phi$ vanishes there. 
Using an implicit Euler scheme to discretize \eqref{eq:heat} in time and denoting $f^n (\cdummy) = f (\cdummy, t_n)$, we get the following discretization in time: given
$u^n = \phi w^n$ find $u^{n + 1} = \phi w^{n + 1}$ such that
\begin{equation}
\label{eq:time_scheme}\dfrac{\phi w^{n + 1} - \phi w^n}{\Delta t} - \Delta
(\phi w^{n + 1}) = f^{n + 1} \hspace{0.17em} .
\end{equation}
To discretize in space, we introduce the finite element space of degree $k$ on $\Omega_h$,
\[ V_h^{(k)} = \{v_h \in H^1 (\Omega_h) \hspace{0.17em} : \hspace{0.17em} v_h
|_T \in \bb{P}_k (T), \hspace{0.17em} \forall \hspace{0.17em} T \in \Th \}\,,\] for some $k\geqslant 1$.
Supposing that $f$ and $u^0$ are actually well defined on $\Omega_h$ (rather
than on $\Omega$ only), we can finally introduce the $\phi$-FEM scheme for
\eqref{eq:heat} as follows: find $w_h^{n + 1} \in V_h^{(k)}$, $n = 0,
1, \ldots, N - 1$ such that for all $v_h \in V_h^{(k)}$
\begin{multline}\label{eq:scheme}
\int_{\Omega_h} \frac{\phi_h w_h^{n + 1}}{\Delta t} \phi_h v_h +
\int_{\Omega_h} \nabla (\phi_h w_h^{n + 1}) \cdot \nabla (\phi_h v_h) 
- \int_{\partial \Omega_h} \frac{\partial}{\partial n}  (\phi_h w_h^{n + 1}) \phi_h v_h \\
+ \sigma h \sum_{E \in \mathcal{F}_h^{\Gamma}} \int_E \left[ \frac{\partial (\phi_h w_h^{n + 1})}{\partial n}  \right]  \left[ \frac{\partial	(\phi_h v_h)}{\partial n} \right]  
- \sigma h^2  \sum_{K\in\Th^{\Gamma}}\int_{K} \left( \frac{\phi_h w_h^{n +1 }}{\Delta t} - \Delta (\phi_h w_h^{n + 1}) \right) \Delta (\phi_h v_h) \\
= \int_{\Omega_h} \left( \frac{u_h^n}{\Delta t} + f^{n + 1} \right) \phi_h
v_h - \sigma h^2  \sum_{K\in\Th^{\Gamma}}\int_{K} \left( \frac{u_h^n}{\Delta t} +
f^{n + 1} \right) \Delta (\phi_h v_h) 
\end{multline}
with $u_h^n = \phi_h w_h^n$ for $n \geqslant 1$ and $u_{h }^0 \in V_h^{(k)}$ an interpolant of $u^0$. 
Moreover, $\phi_h$ is the piecewise polynomial  interpolation of $\phi$ in $V_h^{(l)}$, with $l \geqslant k$.
This scheme contains two stabilization terms: the ghost stabilization (the sum on the facets in $\mathcal{F}_h^\Gamma$) as in \cite{ghost}, and a least-square stabilization (the terms multiplied by $\sigma h^2$) that reinforces (\ref{eq:time_scheme}) on the cells of $\Th^\Gamma$.

\begin{remark}
Our approach can be easily generalized to non-homogeneous Dirichlet boundary conditions $u = u_D$ on $\Gamma \times (0, T)$. We can pose then $u_h^n = \phi_h w_h^n + I_hu_g(\cdot,t_n)$ where $u_g$ is some lifting of $u_D$ from $\Gamma$ to $\Omega_h$ and $I_h$ stands for a finite element interpolation to $V_h^{(k)}$. Scheme (\ref{eq:scheme}) should then be modified accordingly, 
replacing $\phi_h w_h^{n+1}$ by $\phi_h w_h^{n+1} + I_hu_g(\cdot,t_{n+1})$ which results in some additional terms on the right-hand side.
\end{remark}

We recall from \cite{phifem} the assumptions  on the domain and on the mesh required in the theoretical study of the convergence of the $\phi$-FEM scheme.
These assumptions are satisfied if the boundary $\Gamma$ is regular enough and the mesh $\Th$ is fine enough.

\begin{assumption}\label{assumption1}
The boundary $\Gamma$ can be covered by open sets $\mathcal{O}_i$, $i=1,\dots,I$ on which ones we can introduce local coordinates $\xi_1,\dots,\xi_d$ with $\xi_d =\phi$ and such that, up to order $k+1$, all the partial derivatives $\partial^\alpha \xi_i / \partial x^\alpha$ and $ \partial x^\alpha / \partial^\alpha \xi_i $ are bounded by a constant $C_0 > 0$.
Thus, on $\mathcal{O}$, $\phi$ is of class $C^{k+1}$ and there exists $m>0$ such that on $\mathcal{O} \setminus \cup_{i=1,\dots,I} \mathcal{O}_i$, $|\phi| \geqslant m$.
\end{assumption}

\begin{assumption}\label{assumption2}
The approximate boundary $\Gamma_h=\{\phi_h=0\}$ can be covered by element patches $\{ \Pi_k\}_{r=1,\dots,N_\Pi}$ such that : 
\begin{itemize}
\item Each patch $\Pi_r$ can be written $\Pi_r=\Pi_r^\Gamma\cup T_r$ with $\Pi_r^\Gamma \subset \Th^\Gamma$ and  $T_r \in \Th \setminus \Th^\Gamma$. Moreover $\Pi_r$   contains less than $M$ elements and these elements are connected;
\item $\Th^\Gamma = \cup_{r=1,\dots, N_\Pi} \Pi_r^\Gamma$; 
\item Two patches $\Pi_r$ and $\Pi_s$ are disjoint if $r \neq s$.
\end{itemize}
\end{assumption}

\begin{theorem}
	\label{thm:error}
	Assume $\Omega \subset \Omega_h$, $l\geq k$, 
	Assumption \ref{assumption1}-\ref{assumption2}, $f \in H^1  (0, T ; H^{k- 1}(\Omega_h))$ and $u\in H^2 (0, T; H^{k-1} (\Omega))
	$ being the exact solution	to \eqref{eq:heat},  $u^n (\cdummy) = u (\cdummy, t_n)$   and $w_h^n$ be the solution to	\eqref{eq:scheme} for $n = 1, \ldots, N$. 
	For $\sigma$ large enough, there exist $c, C > 0$  depending only on the regularity of mesh $\Th$ and on the constants of Ass. \ref{assumption1}-\ref{assumption2} (with $C$ also depending on $T$), such that if $\Delta t \geqslant ch^2 $ then
	\begin{multline*}
	\left( \sum_{n = 0}^N \Delta t|u^n - \phi_h w_h^n |_{H^1(\Omega)}^2\right)^{\frac{1}{2}}
	\leqslant C \| u^0 - u^0_h \|_{L^2(\Omega_h)}\\ + C (h^k + \Delta t) 
	\left(\|u\|_{H^2 (0, T; H^{k-1} (\Omega))
	}	+\|f\|_{H^1 (0, T; H^{k - 1}(\Omega_h) )} \right)
	\end{multline*}	
	and 
	\begin{multline*}
	\max_{1 \leqslant n \leqslant N} \|u^n - \phi_h w_h^n \|_{L^2(\Omega)}
	\leqslant C \| u^0 - u^0_h \|_{L^2(\Omega_h)}\\ + C(h^{k + \frac{1}{2}} +
	\Delta t) 
	\left(\|u\|_{H^2 (0, T; H^{k-1} (\Omega))
	}	+\|f\|_{H^1 (0, T; H^{k - 1}(\Omega_h) )} \right)\,.
	\end{multline*}
\end{theorem}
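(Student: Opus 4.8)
The plan is to follow the classical energy-method analysis of implicit-Euler parabolic schemes (as in \cite{Thomee}), adapted to the $\phi$-FEM framework via the stationary estimates of \cite{phifem}. First I would rewrite \eqref{eq:scheme} compactly as $B_h^{\Delta t}(w_h^{n+1},v_h)=L^{n+1}(v_h)$, where $B_h^{\Delta t}$ collects the mass term $\frac{1}{\Delta t}\int_{\Omega_h}\phi_h w\,\phi_h v$, the stiffness and $\partial\Omega_h$ boundary terms, the ghost penalty on $\mathcal{F}_h^\Gamma$, and the least-squares residual on $\Th^\Gamma$. I would then split the error as $u^n-\phi_h w_h^n=\rho^n+\phi_h\theta^n$, with $\rho^n:=u^n-\phi_h\tilde w^n$ the approximation error of a suitable interpolant $\phi_h\tilde w^n$ of $u^n$ (where $\tilde w^n\in V_h^{(k)}$ interpolates $w^n=u^n/\phi$, which inherits the regularity of $u^n$ through the coordinate charts of Assumption \ref{assumption1}), and $\theta^n:=\tilde w^n-w_h^n\in V_h^{(k)}$ the purely discrete error. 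The triangle inequality reduces the theorem to estimating $\rho^n$ by interpolation and $\phi_h\theta^n$ by a discrete stability argument; note that $\|u^0-u_h^0\|_{L^2(\Omega_h)}$ in the bound absorbs $\|\phi_h\theta^0\|_{L^2}$ together with $\|\rho^0\|_{L^2}$.

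Next I would establish the discrete coercivity of $B_h^{\Delta t}$ in the mesh-dependent norm
\[
\|v\|_h^2:=\tfrac{1}{\Delta t}\|\phi_h v\|_{L^2(\Omega_h)}^2+|\phi_h v|_{H^1(\Omega_h)}^2+\sigma h\sum_{E\in\mathcal{F}_h^\Gamma}\big\|[\partial_n(\phi_h v)]\big\|_{L^2(E)}^2+\sigma h^2\sum_{K\in\Th^\Gamma}\|\Delta(\phi_h v)\|_{L^2(K)}^2.
\]
The stiffness, ghost-penalty and boundary terms are handled as in the stationary analysis \cite{phifem}: a trace/inverse inequality bounds $\int_{\partial\Omega_h}\partial_n(\phi_h v)\,\phi_h v$ by the ghost penalty and a small fraction of $|\phi_h v|_{H^1}^2$ once $\sigma$ is large enough. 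The genuinely new point is the sign-indefinite cross term $-\sigma h^2\sum_K\int_K\frac{\phi_h v}{\Delta t}\Delta(\phi_h v)$: a Young inequality bounds it by $\frac{\sigma h^2}{2\varepsilon\Delta t}\|\phi_h v\|^2+\frac{\sigma h^2\varepsilon}{2\Delta t}\|\Delta(\phi_h v)\|^2$, and these are absorbed, respectively, by the mass term and the least-squares term precisely when $\sigma h^2\le\varepsilon\le\Delta t$ — i.e. under the stated constraint $\Delta t\ge ch^2$. I would also record here the $\phi$-FEM norm-equivalence lemmas of \cite{phifem} that let one pass between $\Omega_h$ and $\Omega$, controlling $\|\cdot\|_{H^1(\Omega_h\setminus\Omega)}$ by the stabilization, so that $\|\cdot\|_h$ dominates $|\phi_h\cdot|_{H^1(\Omega)}$ and $\|\phi_h\cdot\|_{L^2(\Omega)}$.

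I would then derive the error equation for $\theta$. Since $u$ solves \eqref{eq:heat}, the time-discrete exact solution satisfies $\frac{u^{n+1}-u^n}{\Delta t}-\Delta u^{n+1}=f^{n+1}+\tau^{n+1}$ with $\tau^{n+1}$ the $O(\Delta t)$ truncation error. Subtracting the scheme gives $B_h^{\Delta t}(\theta^{n+1},v_h)=\frac{1}{\Delta t}(\phi_h\theta^n,\phi_h v_h)_{\Omega_h}+R^{n+1}(v_h)$, where the consistency residual $R^{n+1}$ collects the interpolation error $\rho$, the truncation $\tau^{n+1}$, and the $\phi$-FEM geometric terms from $\phi_h\ne\phi$, from integrating over $\Omega_h$ instead of $\Omega$, and from the boundary integral on $\partial\Omega_h$. \textbf{This consistency estimate is the main obstacle}: one must show $|R^{n+1}(v_h)|\le C(h^k+\Delta t)\,G^{n+1}\,\|v_h\|_h$ in the energy dual norm, with $G^{n+1}$ controlled by the $H^{k+1}(\Omega)$-regularity of $u^{n+1}$ and the norm of $f$. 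A key device here is \emph{elliptic regularity}: since $-\Delta u^{n+1}=f^{n+1}-\partial_t u(t_{n+1})$, the hypotheses $u\in H^2(0,T;H^{k-1}(\Omega))$ and $f\in H^1(0,T;H^{k-1}(\Omega_h))$ yield $u^{n+1}\in H^{k+1}(\Omega)$ with the right bounds, which is exactly what interpolation and the stationary $\phi$-FEM consistency estimates of \cite{phifem} require. The boundary term $\int_{\partial\Omega_h}\partial_n(\phi_h\tilde w^{n+1})\phi_h v_h$ is controlled because $\phi_h$ is $O(h)$ on $\Gamma_h$ together with the ghost penalty, exactly as in the elliptic case.

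Finally, I would close with the energy/discrete-Gronwall argument. Taking $v_h=\theta^{n+1}$, coercivity gives $c\,\|\theta^{n+1}\|_h^2\le\frac{1}{\Delta t}(\phi_h\theta^n,\phi_h\theta^{n+1})+R^{n+1}(\theta^{n+1})$; the identity $\frac{1}{\Delta t}\|\phi_h\theta^{n+1}\|^2-\frac{1}{\Delta t}(\phi_h\theta^n,\phi_h\theta^{n+1})\ge\frac{1}{2\Delta t}\big(\|\phi_h\theta^{n+1}\|^2-\|\phi_h\theta^n\|^2\big)$ lets me telescope the mass contribution. Multiplying by $\Delta t$, summing over $n$, and absorbing $R^{n+1}(\theta^{n+1})$ by Young's inequality against $\|\theta^{n+1}\|_h$ yields $\sum_n\Delta t\,|\phi_h\theta^n|_{H^1(\Omega)}^2\le C\|\phi_h\theta^0\|^2+C\sum_n\Delta t\,\|R^n\|_*^2$, and the summed consistency bound is $O((h^k+\Delta t)^2)$ times the stated regularity norms; combined with the interpolation bound on $\rho$ and the norm-equivalence this gives the $l^2(H^1)$ estimate. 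Keeping instead $\max_n\|\phi_h\theta^n\|_{L^2}^2$ on the left of the telescoped inequality and bounding the right-hand side gives the $l^\infty(L^2)$ estimate; the loss of half an order (the $h^{k+\frac12}$ rather than $h^{k+1}$) stems from bounding the $L^2$ error through the energy norm — in particular through the $\sqrt{h}$-weighted ghost-penalty and boundary consistency terms — rather than through a full duality (Aubin–Nitsche) argument, consistent with the estimate being only slightly suboptimal.
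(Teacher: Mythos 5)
Your overall architecture (energy method, stability of the stabilized bilinear form under $\Delta t\geqslant ch^2$, consistency, discrete Gronwall) matches the paper in spirit, and your treatment of the indefinite cross term $-\sigma h^2\sum_K\int_K\frac{\phi_h v}{\Delta t}\Delta(\phi_h v)$ by Young's inequality with $\sigma h^2\leqslant\varepsilon\leqslant\Delta t$ is essentially the paper's argument. The genuine gap is in your choice of decomposition. You take $\tilde w^n$ to be an \emph{interpolant} of $w^n=u^n/\phi$, so that the error equation for $\theta^n=\tilde w^n-w_h^n$ carries the full elliptic consistency residual, which you yourself bound only by $C(h^k+\Delta t)\,G^{n+1}\|v_h\|_h$ in the energy dual norm. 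After telescoping, this yields $\max_n\|\phi_h\theta^n\|_{L^2}\lesssim\|\phi_h\theta^0\|_{L^2}+(\sum_n\Delta t\|R^n\|_*^2)^{1/2}\lesssim h^k+\Delta t$, and adding the $O(h^{k+1})$ interpolation error of $\rho^n$ still leaves you at order $h^k$ in $l^\infty(L^2)$ --- short of the claimed $h^{k+\frac12}$. Your closing explanation (attributing the half-order loss to the $\sqrt h$-weighted penalty terms) does not repair this: the obstruction is the $O(h^k)$ elliptic residual itself, which no rebalancing of the Young inequalities removes.

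The paper avoids this by Wheeler's trick: $\tilde w_h(t)$ is defined not as an interpolant but as the \emph{discrete elliptic projection}, i.e.\ the $\phi$-FEM solution of $a_h(\phi_h\tilde w_h,\phi_h v_h)=\int_{\Omega_h}(f-\partial_t\tilde u)\phi_h v_h-\sigma h^2\sum_K\int_K(f-\partial_t\tilde u)\Delta(\phi_h v_h)$ at each time. Subtracting this from the scheme cancels the elliptic part exactly, and the only residual left is $\partial_t\tilde u^{n+1}-\phi_h\frac{\tilde w_h^{n+1}-\tilde w_h^n}{\Delta t}$, measured in $L^2(\Omega_h)$ (not in an $H^{-1}$-type norm) and bounded by $\sqrt{\Delta t}\,\|\partial_{tt}\tilde u\|+h^{k+\frac12}\Delta t^{-\frac12}\|\partial_t f-\partial_{tt}\tilde u\|$ via a Taylor remainder plus the \emph{time-differentiated} version of Lemma \ref{ThPoisDir}. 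Squaring, weighting by $\Delta t$ and summing then gives $\max_n\|e_h^n\|_{L^2}\lesssim\Delta t+h^{k+\frac12}$, and the final $h^{k+\frac12}$ in $l^\infty(L^2)$ comes from the $L^2$ elliptic projection error of Lemma \ref{ThPoisDir} itself (where the half order is lost, for the elliptic $\phi$-FEM reasons, not in the parabolic argument). To make your plan deliver the second estimate you would need to either switch to this projection or add a genuine duality argument; as written, only your $l^2(H^1)$ bound goes through. A minor additional caveat: if your coercivity constant for $B_h^{\Delta t}$ in front of the mass term is $c<1$, the inequality $\frac{c}{\Delta t}\|\phi_h\theta^{n+1}\|^2-\frac{1}{\Delta t}(\phi_h\theta^n,\phi_h\theta^{n+1})\geqslant\frac{1}{2\Delta t}(\|\phi_h\theta^{n+1}\|^2-\|\phi_h\theta^n\|^2)$ fails; the paper sidesteps this by absorbing the stabilization damage into the extra term $\|e_h^{n+1}-e_h^n\|^2$ produced by the polarization identity rather than into the mass term.
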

\begin{remark}
~
\begin{itemize}
    \item If $k=1$, the norms on the right hand side of the estimates above can be replaced by the norm of $f$ alone in $H^1(0, T ; L^2(\Omega_h))$. Indeed, recalling $\Omega\subset\Omega_h$, this assumption on $f$ implies $u \in H^2  (0, T ; L^2(\Omega)) \cap H^1  (0, T ; H^{2}(\Omega))$, see e.g. \cite[Theorems 5 and 6, Chapter 7.1]{evans2010partial}. On the other hand, imposing such regularity on $u$ over $\Omega$, would not suffice to control the extension of $f$ outside of $\Omega$, so that the regularity of $f$ on  $\Omega_h$ should be postulated any way. This contrasts  with the usual \textit{a priori} estimates for standard FEM (see e.g. \cite{Thomee}).
    \item If $k>1$, we need to suppose the regularity of both $u$ and $f$ as stated above. 
\end{itemize}
\end{remark}

In the rest of the paper, the letter $C$, eventually with subscripts, will stand for various constants depending on the mesh regularity,  the constants from Ass. \ref{assumption1}-\ref{assumption2}, and also on $T$ (when specifically mentioned). Before the proof of Theorem \ref{thm:error}, we recall some results from
\cite{phifem} about $\phi$-FEM for the Poisson equation
with Dirichlet boundary conditions.

\begin{lemma}[cf. {\cite[Lemma 3.7]{phifem}}]\label{LemCoer} 
	Consider the bilinear form	
	\begin{equation*}
	a_h (u, v) = \int_{\Omega_h} \nabla u \cdot \nabla v - \int_{\partial
		\Omega_h} \frac{\partial u}{\partial n} v + \sigma h \sum_{E \in \mathcal{F}_h^{\Gamma}} \int_E \left[ \frac{\partial u}{\partial n}  \right]  \left[ \frac{\partial v}{\partial n} \right] + \sum_{K \in
		\mathcal{T}^{\Gamma}_h} \sigma h^2  \int_K \Delta u \,\Delta v  .
	\end{equation*}
	Provided $\sigma$ is chosen big enough, there exists an $h$-independent
	constant $\alpha > 0$ such that
	\[ a_h (\phi_h v_h, \phi_h v_h) \geqslant \alpha | \phi_h v_h |^2_{H^1(
		\Omega_h)}, \quad \forall v_h \in V_h^{(k)} . \]
\end{lemma}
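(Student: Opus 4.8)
The plan is to set $u=\phi_h v_h$ and expand the form along its four pieces,
$$a_h(u,u)=|u|^2_{H^1(\Omega_h)}-\int_{\partial\Omega_h}\tfrac{\partial u}{\partial n}\,u+\sigma h\sum_{E\in\mathcal{F}_h^\Gamma}\int_E\Big[\tfrac{\partial u}{\partial n}\Big]^2+\sigma h^2\sum_{K\in\Th^\Gamma}\int_K(\Delta u)^2,$$
where the ghost and least-squares contributions are nonnegative. Writing $B:=\int_{\partial\Omega_h}\frac{\partial u}{\partial n}\,u$, the whole statement reduces to controlling this single indefinite term, and the coercivity will follow once $B$ is split into a part absorbable into $|u|^2_{H^1(\Omega_h)}$ and a part dominated, for $\sigma$ large, by the two stabilizations.

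First I would localize $B$ to the cut cells: the facets forming $\partial\Omega_h$ separate an active cell from an inactive one, so $\Gamma_h$ passes through the adjacent cell, which therefore lies in $\Th^\Gamma$. On such a cell $K$, Assumption \ref{assumption1} gives $|\nabla\phi|\geqslant c>0$ near $\Gamma$, hence $|\nabla\phi_h|\geqslant c/2$ and, since $\phi_h$ vanishes on $\Gamma_h\cap K$, $\|\phi_h\|_{L^\infty(K)}\leqslant Ch$. Combining a discrete trace inequality $\|\cdot\|_{L^2(\partial K)}\leqslant Ch^{-1/2}\|\cdot\|_{L^2(K)}$ with $\|u\|_{L^2(\partial\Omega_h\cap K)}\leqslant\|\phi_h\|_{L^\infty(K)}\|v_h\|_{L^2(\partial K)}\leqslant Ch^{1/2}\|v_h\|_{L^2(K)}$ and $\|\partial_n u\|_{L^2(\partial\Omega_h\cap K)}\leqslant Ch^{-1/2}\|\nabla u\|_{L^2(K)}$, a Cauchy--Schwarz step yields $|B|\leqslant C\|\nabla u\|_{L^2(\Th^\Gamma)}\,\|v_h\|_{L^2(\Th^\Gamma)}$. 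A Young inequality then isolates $\tfrac12\|\nabla u\|^2_{L^2(\Th^\Gamma)}\leqslant\tfrac12|u|^2_{H^1(\Omega_h)}$, leaving the task of absorbing $C\|v_h\|^2_{L^2(\Th^\Gamma)}$.

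The heart of the argument is this control of the cut-cell mass $\|v_h\|_{L^2(\Th^\Gamma)}$. Using $\nabla u=v_h\nabla\phi_h+\phi_h\nabla v_h$ and $|\nabla\phi_h|\geqslant c/2$ one recovers $v_h$ from $\nabla u$ up to the remainder $\phi_h\nabla v_h$; but $\phi_h=O(h)$ on $\Th^\Gamma$ makes this remainder comparable to $v_h$ itself through an inverse inequality, so a cellwise estimate does not close, and large $\sigma$ alone cannot beat the fixed trace constants. This is exactly where Assumption \ref{assumption2} and both stabilizations enter: summed over a patch $\Pi_r=\Pi_r^\Gamma\cup T_r$, the ghost penalty transfers the polynomial $v_h$ (equivalently $\nabla u$) from the cut part $\Pi_r^\Gamma$ onto the single uncut cell $T_r$ up to the jump terms $h\int_E[\partial_n u]^2$, while the least-squares term $h^2\int_K(\Delta u)^2$ absorbs the second-order remainders in $\phi_h\Delta v_h$. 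The resulting estimate expresses $\|v_h\|^2_{L^2(\Th^\Gamma)}$ through the interior gradient energy and the two stabilizations, so that after reassembly a fixed positive fraction of $|u|^2_{H^1(\Omega_h)}$ survives and the stabilization terms, entering with a fixed coefficient, are dominated by taking $\sigma$ large.

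The step I expect to be the main obstacle is precisely this patch transfer. It is genuinely non-local --- on an isolated cut cell neither $B$ nor $\|v_h\|$ can be absorbed --- and must be made robust with respect to how $\Gamma_h$ cuts the mesh: the constants have to be independent of the (possibly tiny or skewed) intersections $\Gamma_h\cap K$. This robustness is what forces the use of the quasi-uniformity of $\Th$, the uniform bound $M$ on the number of elements per patch, the connectedness of each patch, and the disjointness of distinct patches from Assumption \ref{assumption2}; it is also the reason the stabilization parameter must be chosen large rather than merely positive. Once this transfer lemma is established, the final assembly by Young's inequality and the selection of $\sigma$ are routine.
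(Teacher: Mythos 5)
First, a point of reference: the paper does not prove this lemma at all --- it is imported verbatim from \cite[Lemma 3.7]{phifem} and used as a black box --- so there is no in-paper argument to compare yours against. Judged on its own terms, your sketch identifies the right structure: the ghost and least-squares terms are nonnegative, everything reduces to the boundary term $B$, the facets of $\partial\Omega_h$ are carried by cells of $\Th^\Gamma$, and on those cells Assumption \ref{assumption1} gives $|\nabla\phi_h|\geqslant c$ and $\|\phi_h\|_{L^\infty}\leqslant Ch$, whence $|B|\leqslant C\|\nabla(\phi_h v_h)\|_{L^2(\Omega_h^\Gamma)}\|v_h\|_{L^2(\Omega_h^\Gamma)}$. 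Your diagnosis that a cellwise bound on $\|v_h\|_{L^2(\Omega_h^\Gamma)}$ cannot close (because $\phi_h\nabla v_h$ is of the same order as $v_h\nabla\phi_h$ on cut cells) and that Assumption \ref{assumption2} together with \emph{both} stabilization terms must intervene is also accurate; these are indeed the ingredients of the cited proof.

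Nevertheless, as a proof the proposal has a genuine gap exactly at the step you yourself call the heart. The patch-transfer estimate is described, not established, and the surrounding bookkeeping does not close as written. After your Young step the quantity to absorb is $C\|v_h\|^2_{L^2(\Omega_h^\Gamma)}$ with a \emph{fixed} constant $C$ determined by trace and inverse constants. Any transfer from $\Pi_r^\Gamma$ to the uncut cell $T_r$ yields a bound of the form $\|v_h\|^2_{L^2(\Pi_r^\Gamma)}\leqslant C_g\bigl(\text{something on }T_r + \text{stabilization terms}\bigr)$ with a transfer constant $C_g$ depending on the mesh regularity and on $M$, which is not small. The stabilization part is harmless, since $\sigma$ large beats any fixed constant there; but the part living on $\cup_r T_r$ re-enters as $C\,C_g$ times a piece of $|\phi_h v_h|^2_{H^1(\Omega_h)}$, and that energy is available with coefficient exactly $1$ --- enlarging $\sigma$ does nothing for it. To close the argument one needs the splitting arranged so that the contribution landing on the $H^1$ energy carries an arbitrarily small factor $\varepsilon$, with the resulting $C_\varepsilon$ hitting only the terms $h\sum_{E\in\mathcal{F}_h^\Gamma}\|[\partial(\phi_hv_h)/\partial n]\|^2_{L^2(E)}$ and $h^2\sum_{K\in\Th^\Gamma}\|\Delta(\phi_hv_h)\|^2_{L^2(K)}$; your sketch neither states nor justifies such an $\varepsilon$-dependent estimate, so the claim that ``a fixed positive fraction of $|u|^2_{H^1(\Omega_h)}$ survives'' is unsubstantiated. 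This quantitative closure, together with the robustness of the constants with respect to how $\Gamma_h$ cuts the cells, is precisely the content of the technical lemmas of \cite{phifem} (e.g.\ Lemma 3.4 there, which the present paper re-uses in the proof of Lemma \ref{lemPoincare}); a self-contained proof must reproduce them rather than describe their role.
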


\begin{lemma}[cf. {\cite[Theorem 2.3]{phifem}}]\label{ThPoisDir}
	For any $f \in H^{k - 1}
	(\Omega_h)$, let $w_h \in V_h^{(k)}$ be the solution to
	\begin{equation*}
	a_h (\phi_h w_h, \phi_h v_h) = \int_{\Omega_h} f \phi_h v_h - \sigma h^2 
	\sum_{K\in\Th^\Gamma} \int_{K} f \Delta (\phi_h v_h) \hspace{0.17em} 
	\end{equation*}	
	and $u \in H^{k + 1} (\Omega)$ be the solution to
	\[ - \Delta u = f \tmop{ in } \Omega, \quad u = 0 \tmop{ on } \Gamma \]
	extended to $\tilde{u} \in H^{k + 1} (\Omega_h)$ so that $u = \tilde{u}$ on
	$\Omega$ and 
	$
	\| \tilde{u} \|_{H^{k + 1}(\Omega_h)}
	\leqslant C\| u \|_{H^{k + 1}(\Omega)}
		\leqslant C\| f \|_{H^{k - 1}(\Omega_h)}.
	$
	Provided $\sigma$ is chosen big enough, there exists an $h$-independent
	constant $C > 0$ such that
	\[ | \tilde{u} - \phi_h w_h |_{H^1(\Omega_h)} \leqslant Ch^k \| f \|_{H^{k - 1}(
		\Omega_h)} \quad \tmop{and} \quad \| \tilde{u} - \phi_h w_h \|_{L^2(
		\Omega_h)} \leqslant Ch^{k + \frac{1}{2}} \| f \|_{H^{k - 1}(\Omega_h)} .\]
\end{lemma}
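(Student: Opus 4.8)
The statement is the stationary $\phi$-FEM estimate of \cite[Theorem 2.3]{phifem}; the plan is to recover it by a Strang-type argument for the $H^1$ bound and an Aubin--Nitsche duality for the $L^2$ bound, with Lemma \ref{LemCoer} playing the role of the coercivity inequality. Throughout I measure both discrete and continuous functions in the augmented norm
\[ \|v\|_{*}^2 := |v|_{H^1(\Omega_h)}^2 + h\,\|\partial_n v\|_{L^2(\partial\Omega_h)}^2 + \sigma h\sum_{E\in\mathcal{F}_h^\Gamma}\|[\partial_n v]\|_{L^2(E)}^2 + \sigma h^2\sum_{K\in\Th^\Gamma}\|\Delta v\|_{L^2(K)}^2 , \]
which, on the discrete space $\{\phi_h v_h : v_h\in V_h^{(k)}\}$, is equivalent to $|\cdot|_{H^1(\Omega_h)}$ by inverse and discrete trace inequalities applied to the piecewise polynomials $\phi_h v_h$; together with Lemma \ref{LemCoer} this gives both coercivity and continuity of $a_h$ in $\|\cdot\|_*$, the boundary flux $\int_{\partial\Omega_h}\partial_n(\cdot)(\cdot)$ being absorbed by the flux term of $\|\cdot\|_*$.

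The first concrete step is the consistency identity. Applying Green's formula on $\Omega_h$ to the smooth extension $\tilde u\in H^{k+1}(\Omega_h)$, whose normal-derivative jumps across interior facets vanish, the boundary flux cancels and I obtain, for every $v_h\in V_h^{(k)}$,
\[ a_h(\tilde u,\phi_h v_h) - L_h(\phi_h v_h) = \int_{\Omega_h\setminus\Omega}(-\Delta\tilde u - f)\,\phi_h v_h + \sigma h^2\sum_{K\in\Th^\Gamma}\int_K (\Delta\tilde u + f)\,\Delta(\phi_h v_h) , \]
where $L_h$ denotes the right-hand side functional of the scheme. The decisive point is that $\Delta\tilde u + f$ vanishes on $\Omega$, so both residuals are supported only on the cut cells $\Th^\Gamma$ and on the boundary strip $\Omega_h\setminus\Omega$, whose width is $O(h)$.

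Next I build the discrete comparison function from the lifted variable $w := u/\phi$. By Assumption \ref{assumption1} (local coordinates with $\xi_d=\phi$) and $u|_\Gamma=0$, Hadamard's lemma gives $w\in H^k$, with full $H^{k+1}$ regularity away from $\Gamma$ since there $|\phi|\geq m>0$; I extend $w$ to $\tilde w$ on $\Omega_h$ and compare $\tilde u$ with $\phi_h I_h\tilde w$, $I_h$ the Lagrange interpolant onto $V_h^{(k)}$. Writing $\tilde u-\phi_h I_h\tilde w=(\phi-\phi_h)\tilde w+\phi_h(\tilde w-I_h\tilde w)$, the first term is $O(h^k)$ because $\|\phi-\phi_h\|_{W^{1,\infty}}\leq Ch^{k}$ (this is where $l\geq k$ enters), and the second is estimated by splitting the mesh: on interior cells $\tilde w$ is $H^{k+1}$-regular and yields the optimal rate, while on the cut cells the single derivative lost by $w$ is exactly compensated by $|\phi_h|\leq Ch$. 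This gives $\|\tilde u-\phi_h I_h\tilde w\|_*\leq Ch^k\|f\|_{H^{k-1}(\Omega_h)}$. Inserting $\phi_h I_h\tilde w$ between $\tilde u$ and $\phi_h w_h$ and applying coercivity to $\zeta_h:=\phi_h w_h-\phi_h I_h\tilde w$, the Galerkin identity $a_h(\phi_h w_h,\zeta_h)=L_h(\zeta_h)$ turns $a_h(\tilde u-\phi_h w_h,\zeta_h)$ into minus the consistency residual above; bounding that residual by $Ch^k\|f\|_{H^{k-1}(\Omega_h)}\|\zeta_h\|_*$ and the interpolation part by continuity yields the $H^1$ estimate after a triangle inequality.

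The $L^2$ bound follows by duality: let $z$ solve $-\Delta z=e$ in $\Omega$, $z=0$ on $\Gamma$, with $e=\tilde u-\phi_h w_h$, so that $\|e\|_{L^2(\Omega)}^2=\int_\Omega e(-\Delta z)$; pairing the $H^1$ error against $\tilde z-\phi_h I_h\tilde z$ (of $\|\cdot\|_*$-size $O(h)\|z\|_{H^2}\leq O(h)\|e\|_{L^2}$) produces the expected $h^{k+1}$ contribution, while the remaining part comes from the consistency residual of $z$ on the boundary strip. I expect this last estimate to be the main obstacle: the residual $\Delta\tilde u+f$ and the normal flux over the unfitted boundary must be controlled by trace inequalities tailored to the cut cells, and it is precisely here that only a half power of $h$ can be salvaged, so the strip contribution scales like $h^{k+1/2}$ rather than $h^{k+1}$. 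This fixes the slightly suboptimal $L^2$ rate; the $L^2$ norm over the fictitious part $\Omega_h\setminus\Omega$ is finally absorbed using the $H^1$ error and the $O(h)$ width of the strip, giving $\|e\|_{L^2(\Omega_h)}\leq Ch^{k+1/2}\|f\|_{H^{k-1}(\Omega_h)}$.
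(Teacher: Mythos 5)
You should first note that the paper contains no proof of this lemma at all: it is imported verbatim from \cite[Theorem 2.3]{phifem}, and the remark following it explains the only new ingredient, namely that the hypothesis can be weakened from $f\in H^{k}(\Omega_h)$ to $f\in H^{k-1}(\Omega_h)$ thanks to the improved interpolation estimate of \cite[Lemma 6]{duprez:hal-03588715}. Measured against that cited proof, your skeleton is the right one and essentially the same: coercivity (Lemma \ref{LemCoer}) and continuity in a triple norm, a consistency identity whose residual $\Delta\tilde u+f$ is supported on $\Omega_h\setminus\Omega$ and on $\Th^\Gamma$ (your Green's-formula computation and the cancellation of the ghost term for $\tilde u\in H^{k+1}$ are correct), a Strang argument against the comparison function $\phi_h I_h\tilde w$ with $w=u/\phi$, and an Aubin--Nitsche duality losing half a power of $h$ on the boundary strip. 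Your splitting $\tilde u-\phi_h I_h\tilde w=(\phi-\phi_h)\tilde w+\phi_h(\tilde w-I_h\tilde w)$, with the lost derivative of $w$ on cut cells compensated by $|\phi_h|\leqslant Ch$ there, is exactly the mechanism of \cite[Lemma 6]{duprez:hal-03588715}.

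There is, however, one genuine gap, and it sits precisely at the point the paper's remark is about: you take $I_h$ to be the \emph{Lagrange} interpolant of $\tilde w$, but Hadamard's lemma only gives $w\in H^{k}$ near $\Gamma$, so for $k=1$ (the main case, $d=2,3$) the function $\tilde w$ is merely $H^1$ and has no well-defined nodal values --- the Lagrange interpolant does not exist on the cut cells, which is where you need it. The cited fix is to use the Scott--Zhang (or Cl\'ement) quasi-interpolant, whose stability and approximation properties for $H^{k}$ functions yield $\|\tilde u-\phi_h I_h\tilde w\|_{H^s(\Omega_h)}\leqslant Ch^{k+1-s}\|f\|_{H^{k-1}(\Omega_h)}$ for $s=0,1$; with Lagrange interpolation your argument only covers $k\geqslant 2$, i.e.\ it silently reverts to the stronger regularity of the original version of the theorem. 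Two smaller points deserve spelling out if you want a complete proof: bounding the strip residual by $Ch^{k}\|f\|_{H^{k-1}(\Omega_h)}\|\zeta_h\|_*$ requires the estimate $\|\Delta\tilde u+f\|_{L^2(\Omega_h\setminus\Omega)}\leqslant Ch^{k-1}\|f\|_{H^{k-1}(\Omega_h)}$, which uses that this function vanishes identically on $\Omega$ (so all its traces on $\Gamma$ up to order $k-2$ vanish, enabling an iterated strip Poincar\'e argument), together with $\|\phi_h v_h\|_{L^2(\Omega_h^\Gamma)}\leqslant Ch|\phi_h v_h|_{H^1(\Omega_h^\Gamma)}$ as in \cite[Lemma 3.4]{phifem}; and your duality paragraph correctly \emph{locates} the source of the $h^{1/2}$ loss but asserts rather than derives it --- the actual mechanism is the trace-type bound $\|z\|_{L^2(\Omega_h^\Gamma)}\leqslant Ch^{1/2}\|z\|_{H^1(\Omega_h)}$ for the adjoint solution, paired with $\|z\|_{H^2(\Omega)}\leqslant C\|e\|_{L^2(\Omega)}$.
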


\begin{remark}
This result is proven in \cite{phifem} under the more stringent assumption $f\in H^{k}(\Omega_h)$ which was used to assure $\tilde u\in H^{k+2}(\Omega_h)$ and to provide an interpolation error of $\tilde u$ by a product $\phi_h w_h$. However, in \cite[Lemma 6]{duprez:hal-03588715} we have proven a better interpolation estimate $ \| \tilde u - \phi_h I_h w\|_{H^s(\Omega_h)} \leqslant Ch^{k+1 - s}  \|f\|_{H^{k-1}(\Omega_h)}$ ($s = 0, 1$) for $\tilde u=\phi w$ and the Scott-Zhang interpolant $I_h$. Thus, $f\in H^{k-1}(\Omega_h)$ is actually sufficient.
\end{remark}

\begin{lemma}
\label{lemPoincare}
	For all $v_h \in V_h^{(k)}$, there holds
	$$\|\phi_hv_h\|_{L^2(\Omega_h) } \leqslant C_P |\phi_hv_h|_{H^1(\Omega_h) }.$$ 
\end{lemma}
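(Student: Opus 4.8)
The key structural fact is that $u_h \assign \phi_h v_h$ vanishes \emph{identically} on the discrete interface $\Gamma_h=\{\phi_h=0\}$, which is an internal hypersurface of $\Omega_h$ lying at distance $\mathcal{O}(h^2)$ from the exact boundary $\Gamma$. So the plan is to treat $u_h$ as a function that ``almost'' vanishes on $\partial\Omega$ and to run a Poincar\'e--Friedrichs argument on the \emph{fixed} smooth domain $\Omega$, paying only lower-order corrections for the mismatch between $\Gamma$, $\Gamma_h$ and $\partial\Omega_h$. Note that the product structure $\phi_h v_h$ is used only through this vanishing property, so any $H^1(\Omega_h)$ function null on $\Gamma_h$ would do.

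First I would record the trace--Poincar\'e inequality on the fixed domain $\Omega$,
\[
\|v\|_{L^2(\Omega)} \leqslant C\left(|v|_{H^1(\Omega)} + \|v\|_{L^2(\Gamma)}\right), \qquad \forall v\in H^1(\Omega),
\]
with $C$ depending only on $\Omega$. This is standard and follows from a Rellich--Kondrachov compactness/contradiction argument: a bounded sequence with vanishing $H^1$-seminorm and vanishing trace would converge (in $L^2(\Omega)$ and, for the trace, in $L^2(\Gamma)$ by compactness of the trace map) to a nonzero constant with zero trace, a contradiction. Applying this to $v=u_h|_\Omega$ and using $\|u_h\|_{L^2(\Omega_h)}^2=\|u_h\|_{L^2(\Omega)}^2+\|u_h\|_{L^2(\Omega_h\setminus\Omega)}^2$ together with $|u_h|_{H^1(\Omega)}\leqslant|u_h|_{H^1(\Omega_h)}$, the task reduces to bounding the trace $\|u_h\|_{L^2(\Gamma)}$ and the thin-layer term $\|u_h\|_{L^2(\Omega_h\setminus\Omega)}$ by $|u_h|_{H^1(\Omega_h)}$.

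Both remaining terms I would control with a single tool: Fermi (tubular) coordinates $(s,t)$ around $\Gamma$, valid for $h$ small by the smoothness of $\Gamma$ (Assumption~\ref{assumption1}), in which $\Gamma=\{t=0\}$. Since $\|\phi-\phi_h\|_{L^\infty}=\mathcal{O}(h^2)$ while $|\nabla\phi|$ is bounded below near $\Gamma$ (Assumption~\ref{assumption1}), the set $\Gamma_h$ is a small Lipschitz graph $\{t=\rho(s)\}$ with $|\rho(s)|\leqslant Ch^2$. As $u_h(s,\rho(s))=0$, the identity $u_h(s,t)=\int_{\rho(s)}^{t}\partial_\tau u_h(s,\tau)\,d\tau$ with Cauchy--Schwarz gives $|u_h(s,t)|^2\leqslant|t-\rho(s)|\int|\partial_\tau u_h|^2\,d\tau$ on a.e.\ normal line. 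Taking $t=0$ and integrating over $\Gamma$ yields $\|u_h\|_{L^2(\Gamma)}^2\leqslant Ch^2|u_h|_{H^1(\Omega_h)}^2$; integrating instead over $0<t<\mathcal{O}(h)$, the normal width of the layer $\Omega_h\setminus\Omega$ (the cut cells of $\Th^{\Gamma}$ have diameter $\mathcal{O}(h)$ and $\Gamma\subset\Omega_h$, so the normal segments stay inside $\Omega_h$), yields $\|u_h\|_{L^2(\Omega_h\setminus\Omega)}^2\leqslant Ch^2|u_h|_{H^1(\Omega_h)}^2$. Combining the three estimates gives $\|u_h\|_{L^2(\Omega_h)}^2\leqslant C(1+h^2)|u_h|_{H^1(\Omega_h)}^2$, which is the claim with an $h$-independent $C_P$.

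The main obstacle is exactly the $h$-independence of $C_P$: everything hinges on $u_h$ \emph{vanishing} on $\Gamma_h$ (not merely being small there) and on the geometric facts that $\Gamma_h$ is a graph lying $\mathcal{O}(h^2)$-close to $\Gamma$ and that the whole boundary layer is swept by normal segments reaching $\Gamma_h$ within $\Omega_h$. Verifying that these Fermi-coordinate constructions are legitimate uniformly in $h$ — a tubular neighbourhood of controlled width, with $|\nabla\phi|$ bounded below so that $\{\phi_h=0\}$ is a genuine small graph — is where Assumption~\ref{assumption1} enters and is the only delicate point; the Poincar\'e estimate on $\Omega$ and the one-dimensional integrations along normals are then routine.
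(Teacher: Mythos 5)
Your argument is essentially correct and yields the stated $h$-independent constant, but it takes a genuinely different route from the paper, and a more laborious one. You split $\Omega_h$ along the \emph{exact} boundary $\Gamma$ into $\Omega$ and the thin layer $\Omega_h\setminus\Omega$, use a trace--Poincar\'e inequality on the fixed domain $\Omega$, and then control both the trace on $\Gamma$ and the layer by integrating along normals starting from $\Gamma_h$, where $\phi_h v_h$ vanishes. The paper instead splits along the \emph{discrete} interface: on $\tilde\Omega_h=\{\phi_h<0\}$ the function $\phi_h v_h$ is genuinely in $H^1_0(\tilde\Omega_h)$, so the elementary Poincar\'e inequality with constant proportional to $\mathrm{diam}(\tilde\Omega_h)\leqslant\mathrm{diam}(\mathcal{O})$ applies after extension by zero --- no trace inequality, no compactness, no tubular coordinates --- while the remainder $\Omega_h\setminus\tilde\Omega_h$ is contained in the union $\Omega_h^\Gamma$ of cut cells, for which the already-established bound $\|\phi_h v_h\|_{L^2(\Omega_h^\Gamma)}\leqslant Ch\,|\phi_h v_h|_{H^1(\Omega_h^\Gamma)}$ (Lemma 3.4 of the $\phi$-FEM paper, proven patchwise using Assumption~\ref{assumption2}) does all the work near the boundary. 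What the paper's choice buys is precisely the avoidance of the point you yourself flag as delicate: once you cut along $\Gamma$ rather than $\Gamma_h$, you must justify that $\Gamma_h$ is a uniform graph $\{t=\rho(s)\}$ with $|\rho|\leqslant Ch^2$ over $\Gamma$ and --- more problematically --- that every normal segment joining $\Gamma_h$ to a point of $\Omega_h\setminus\Omega$ (and the $O(h^2)$-collar used for the trace bound) stays inside $\Omega_h$. Since $\Omega_h$ is a union of cells selected by the discrete criterion $T\cap\{\phi_h<0\}\neq\emptyset$, this containment is plausible for $h$ small but is not automatic and is nowhere proven in your sketch; note also that your very first identity $\|u_h\|^2_{L^2(\Omega_h)}=\|u_h\|^2_{L^2(\Omega)}+\|u_h\|^2_{L^2(\Omega_h\setminus\Omega)}$ already uses $\Omega\subset\Omega_h$. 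These gaps are repairable (they are of the same nature as what is established in the cited reference), but the paper's decomposition renders them unnecessary.
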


\begin{proof}
Let $\tilde{\Omega}_h=\{\phi_h<0\}$.
By the Poincar\'e inequality, 
	$$\|\phi_hv_h\|_{L^2(\tilde{\Omega}_h) } \leqslant C\mbox{diam}(\tilde{\Omega}_h) |\phi_hv_h|_{H^1(\tilde{\Omega}_h) },$$
	and $\mbox{diam}(\tilde{\Omega}_h) \leqslant \mbox{diam}(\mathcal{O})$.
	Moreover, thanks to \cite[Lemma 3.4]{phifem}, it holds
	$$\|\phi_hv_h\|_{L^2(\Omega_h\backslash \tilde{\Omega}_h) } \leqslant\|\phi_hv_h\|_{L^2(\Omega_h^{\Gamma}) } \leqslant C h |\phi_hv_h|_{H^1(\Omega_h^{\Gamma}) },$$ 
	where $\Omega_h^{\Gamma}$ is the domain occupied by the mesh $\Th^{\Gamma}$.
	We conclude noting $\Omega\subset\tilde{\Omega}_h\cup\Omega_h^{\Gamma}$.
\end{proof}

\begin{proof}[Proof of Theorem \ref{thm:error}]
There exists a function $\tilde{u} \in H^2 (0, T; H^{k - 1} (\Omega_h) )$, 
an extension of $u$ to $\Omega_h$, such that
\begin{equation}\label{eq:extra}
\| \tilde{u} \|_{H^2(0, T; H^{k - 1}(\Omega_h))} \leqslant C \|u\|_{H^2(0, T; H^{k - 1} (\Omega ) )}.
\end{equation}

Let $w_h^n$ be the solution to our scheme, which we rewrite as
\begin{multline}\label{scheme}
\int_{\Omega_h} \phi_h \frac{w_h^{n + 1} - w^n_h}{\Delta t} \phi_h v_h + a_h
(\phi_h w_h^{n + 1}, \phi_h v_h) - \sum_{T \in \mathcal{T}^{\Gamma}_h}
\sigma h^2  \int_T \phi_h \frac{w_h^{n + 1} - w^n_h}{\Delta t} \Delta
(\phi_h v_h)\\
= \int_{\Omega_h} f^{n + 1} \phi_h v_h - \sum_{T \in \mathcal{T}^{\Gamma}_h}
\sigma h^2  \int_T f^{n + 1} \Delta (\phi_h v_h)
\end{multline}
for $n \geqslant 1$ while $\phi_h w_h^0$ should be replaced with $u_h^0$ for
$n = 0$. 

For any time $t \in [0, T]$, introduce $\tilde{w}_h ( \cdot,t) =
\tilde{w}_h \in V_h^{(k)}$, as in Lemma \ref{ThPoisDir}, with $f$ replaced by
$f - \partial_t \tilde{u}$ evaluated at time $t$:
\begin{equation}
\label{tildewh} a_h (\phi_h \tilde{w}_h, \phi_h v_h) = \int_{\Omega_h} (f -
\partial_t \tilde{u}) \phi_h v_h - \sigma h^2  \sum_{K\in\Th^{\Gamma}}\int_{K} (f -
\partial_t \tilde{u}) \Delta (\phi_h v_h).
\end{equation}

Let $\tilde{w}^n_h=\tilde{w}_h(t_n)$ and $e_h^n \assign \phi_h  (w^n_h - \tilde{w}^n_h)$ for $n \geqslant 1$
and $e_h^0 \assign u^0_h - \phi_h \tilde{w}^0_h$. Taking the difference between (\ref{scheme}) and (\ref{tildewh}) at time $t_{n+1}$, we get
\begin{multline*}
\int_{\Omega_h} \frac{e_h^{n + 1} - e_h^n}{\Delta t} \phi_h v_h + a_h 
(e_h^{n + 1}, \phi_h v_h) - \sum_{T \in \mathcal{T}^{\Gamma}_h} \sigma h^2 
\int_T \frac{e_h^{n + 1} - e_h^n}{\Delta t} \Delta (\phi_h v_h)\\
\\
= \int_{\Omega_h} \left( \partial_t \tilde{u}^{n+1} - \phi_h \frac{\tilde{w}^{n +
		1}_h - \tilde{w}^n_h}{\Delta t} \right) \phi_h v_h - \sum_{T \in
	\mathcal{T}^{\Gamma}_h} \sigma h^2  \int_T \left( \partial_t \tilde{u}^{n+1} -
\phi_h \frac{\tilde{w}^{n + 1}_h - \tilde{w}^n_h}{\Delta t} \right) \Delta
(\phi_h v_h).
\end{multline*}
Taking $ v_h=w^{n+1}_h - \tilde{w}^{n+1}_h$, i.e. $\phi_h v_h = e_h^{n + 1}$, applying the equality
\begin{equation*}
\|e^{n + 1}_h \|^2_{L^2(\Omega_h)} - (e^n_h, e^{n + 1}_h)_{L^2(\Omega_h)} =
\frac{\|e_h^{n + 1} \|_{L^2(\Omega_h)}^2 - \|e_h^n \|_{L^2(\Omega_h)}^2 +
	\|e_h^{n + 1} - e_h^n \|_{L^2(\Omega_h)}^2}{2} \hspace{0.17em},
\end{equation*}
and estimating the terms in the RHS by Cauchy-Schwarz and inverse
inequalities $$\| \Delta e_h^{n + 1} \|_{L^2( T)} \leqslant Ch^{-2} \|
e_h^{n + 1} \|_{L^2( T)}$$ we deduce that
\begin{multline}
\label{123} \frac{\|e_h^{n + 1} \|_{L^2(\Omega_h)}^2 - \|e_h^n \|_{L^2(
		\Omega_h)}^2 + \|e_h^{n + 1} - e_h^n \|_{L^2(\Omega_h)}^2}{2 \Delta t} +
\overbrace{a_h  (e_h^{n + 1}, e_h^{n + 1})}^{(I)}
- \overbrace{\sigma h^2  \int_{\Omega_h^{\Gamma}} \frac{e_h^{n + 1} -
		e_h^n}{\Delta t} \Delta e_h^{n + 1}}^{(II)}\\
\leqslant \underbrace{C \left\| \partial_t \tilde{u}^{n+1} - \phi_h
	\frac{\tilde{w}^{n + 1}_h - \tilde{w}^n_h}{\Delta t} \right\|_{L^2(\Omega_h)}
	\| e_h^{n + 1} \|_{L^2(\Omega_h)}}_{(III)}.
\end{multline}
Thanks to the coercivity lemma \ref{LemCoer}, the term $(I)$ can be bounded
from below by $\alpha | e_h^{n + 1} |_{H^1(\Omega_h)}^2$. We now use the Young
inequality (with some $\varepsilon > 0$) and the inverse inequality $\| \Delta
e_h^{n + 1} \|_{L^2(T)} \leqslant C_I h^{-1} | e_h^{n + 1} |_{H^1(T)}$ to
bound the term $(II)$:
\begin{multline}\label{eq:est12}
(I) - (II) \geqslant 
\alpha | e_h^{n + 1} |_{H^1(\Omega_h)}^2 
- \frac{\sigma h^2}{2 \epsilon (\Delta t)^2} \|e_h^{n + 1} - e_h^n \|^2_{L^2(\Omega_h^{\Gamma})} 
- \frac{\epsilon \sigma C_I^2}{2}  | e_h^{n + 1} |^2_{H^1(\Omega_h^{\Gamma})} \\
\geqslant \frac{3}{4} \alpha | e_h^{n + 1} |_{H^1(	\Omega_h)}^2 
- \frac{1}{2 \Delta t } \|e_h^{n + 1} - e_h^n \|^2_{L^2(\Omega_h^{\Gamma})},
\end{multline}
where we have chosen $\epsilon$ so that $\epsilon \sigma C_I^2/2 =
\alpha/4$ and then assumed $\sigma h^2/(\epsilon \Delta t) \leqslant 1$. This will allow us to control the negative term above by the similar positive term in (\ref{123}), and leads to the restriction  $\Delta t \geqslant ch^2$ with  $c=\sigma/\epsilon$.

We turn now to the RHS of (\ref{123}), i.e. term $(III)$. By triangle inequality
\begin{multline}\label{ResTri} 
\left\| \partial_t \tilde{u}^{n + 1} - \phi_h
\frac{\tilde{w}^{n + 1}_h - \tilde{w}^n_h}{\Delta t} \right\|_{L^2(\Omega_h)}
\leqslant \left\| \partial_t \tilde{u}^{n + 1} - \frac{\tilde{u}^{n + 1} -
	\tilde{u}^n}{\Delta t} \right\|_{L^2(\Omega_h)}\\ + \left\| \frac{\tilde{u}^{n
		+ 1} - \tilde{u}^n}{\Delta t} - \phi_h \frac{\tilde{w}^{n + 1}_h -
	\tilde{w}^n_h}{\Delta t} \right\|_{L^2(\Omega_h)}.
\end{multline} 
By Taylor's theorem with integral remainder
\[ \tilde{u}^n(\cdot) = \tilde{u}^{n + 1}(\cdot) - \Delta t \partial_t \tilde{u}^{n + 1}(\cdot)
- \int_{t_n}^{t_{n + 1}} \partial_{tt} \tilde{u} (t,\cdot) (t_n - t)
\tmop{dt} \]
so that
\begin{multline*} \left\| \partial_t \tilde{u}^{n + 1} - \frac{\tilde{u}^{n + 1} -
	\tilde{u}^n}{\Delta t} \right\|_{L^2(\Omega_h)} = \frac{1}{\Delta t} \left\|
\int_{t_n}^{t_{n + 1}} \partial_{tt} \tilde{u} (t,\cdot) (t_n - t)
\tmop{dt} \right\|_{L^2(\Omega_h)} \\\leqslant \sqrt{\Delta t} \|
\partial_{tt} \tilde{u} \|_{L^2(t_n, t_{n + 1}; L^2 (\Omega_h))} .
\end{multline*}
Differentiating $-\Delta u=f -
\partial_t u$ and \eqref{tildewh}
in time, we obtain thanks to Lemma \ref{ThPoisDir},
\begin{equation*}
\| \partial_t (\tilde{u} (t) - \phi_h \tilde{w}_h) (t)
\|_{L^2(\Omega_h)} \leqslant Ch^{k + \frac{1}{2}} \| (\partial_t f -
\partial_{tt} \tilde{u}) (t) \|_{H^{k - 1}(\Omega_h)}.
\end{equation*}
Thus, for the second term in (\ref{ResTri}), we get by the last interpolation estimate:
\begin{align*}
 \left\| \frac{\tilde{u}^{n + 1} - \tilde{u}^n}{\Delta t} - \phi_h \frac{\tilde{w}^{n + 1}_h - \tilde{w}^n_h}{\Delta t} \right\|_{L^2(\Omega_h)}
&= \frac{1}{\Delta t} \left\| \int_{t_n}^{t_{n + 1}} \partial_t (\tilde{u}
(t,\cdot) - \phi_h \tilde{w}_h (t,\cdot)) \tmop{dt} \right\|_{L^2(\Omega_h)} \\ 
&\leqslant \frac{Ch^{k + \frac{1}{2}}}{\sqrt{\Delta t}} \| \partial_t f - \partial_{tt} \tilde{u} \|_{L^2(t_n, t_{n + 1}; H^{k - 1} (\Omega_h))}. \end{align*}
Collecting these estimates and applying the Young inequality with some $\delta
> 0$ and Poincaré inequality from Lemma \ref{lemPoincare}, we get
\begin{multline}\label{eq:est3}
(III) \leqslant \frac{C}{\delta}  \left(\Delta t \| \partial_{tt}
\tilde{u} \|_{L^2(t_n, t_{n + 1}; L^2 (\Omega_h))}^2 + \frac{h^{2 k + 1}}{\Delta t} \|
\partial_t f - \partial_{tt} \tilde{u} \|_{L^2(t_n, t_{n + 1}; H^{k- 1} (\Omega_h))}^2\right) 
		\\+ \frac{\delta C_P^2}{2} | e_h^{n + 1} |^2_{H^1(	\Omega_h)}.
\end{multline}
Substituting \eqref{eq:est12} and \eqref{eq:est3} to (\ref{123}) and taking $\delta$ so that $\delta C_P^2
= \alpha/2$ yields
\begin{multline*} \frac{\|e_h^{n + 1} \|_{L^2(\Omega_h)}^2 - \|e_h^n \|_{L^2(\Omega_h)}^2}{2
	\Delta t} + \frac{\alpha}{2} | e_h^{n + 1} |_{H^1(\Omega_h)}^2 \\ 
\leqslant C \left(\Delta t \| \partial_{tt}
\tilde{u} \|_{L^2(t_n, t_{n + 1}; L^2 (\Omega_h))}^2 + \frac{h^{2 k + 1}}{\Delta t} \|
\partial_t f - \partial_{tt} \tilde{u} \|_{L^2(t_n, t_{n + 1}; H^{k- 1} (\Omega_h))}^2\right) . \end{multline*}
Multiplying this by $2\Delta t$ and summing on $n = 0, \ldots, N - 1$, we get
\begin{multline*}
\|e_h^N \|_{L^2(\Omega_h)}^2 + \alpha \Delta t \sum_{n = 1}^N |e_h^n |_{H^1(	\Omega_h)}^2\\ 
\leqslant \|e_h^0 \|_{L^2(\Omega_h)}^2 
+ C (\Delta t^2 \|
\partial_{tt} \tilde{u} \|_{L^2(0,T; L^2 (\Omega_h))}^2 + h^{2 k +
	1} \| \partial_t f - \partial_{tt} \tilde{u} \|_{L^2(0,T; H^{k - 1}
	(\Omega_h))}^2) .
\end{multline*}
Thus, observing that the sum above can be stopped at any number $n \leqslant N$, we get
\begin{multline*}
\max_{n = 1, \ldots, N} \|e_h^n \|_{L^2(\Omega_h)} + \left( \Delta t
\sum_{n = 1}^N |e_h^n |_{H^1(\Omega_h)}^2 \right)^{\frac{1}{2}} \\
\leqslant C \|e_h^0 \|_{L^2(\Omega_h)} 
+ C \left(\Delta t \|\partial_{tt} \tilde{u} \|_{L^2(0,T; L^2 (\Omega_h))} + h^{k +\frac{1}{2}} \| \partial_t f - \partial_{tt}\tilde{u} \|_{L^2(0,T; H^{k - 1} (\Omega_h))}\right).
\end{multline*} 

Lemma \ref{ThPoisDir} applied to $-\Delta u=f -
\partial_t u$ in $\Omega$ at times $t_n$ gives
\begin{align*}
  &  \max_{n = 0, \ldots, N} \| \tilde{u}^n - \phi_h  \tilde{w}_h^n \|_{L^2(
	\Omega_h)} \leqslant Ch^{k + 1 / 2} \| f - \partial_t \tilde{u} \|_{C ([0,
	T], H^{k - 1} (\Omega_h))},\\
&	\left( \Delta t \sum_{n = 1}^N | \tilde{u}^n - \phi_h  \tilde{w}_h^n
|_{H^1(\Omega_h)}^2 \right)^{\frac{1}{2}} \leqslant C h^k \| f -
\partial_t \tilde{u} \|_{C([0,T], H^{k - 1} (\Omega_h))}.
\end{align*}
In particular,
\begin{multline*}
	\| e_h^0 \|_{L^2(\Omega_h)} \leqslant
		\| u^0-u_h^0 \|_{L^2(\Omega_h)}
		+\| u^0-\phi_h\tilde{w}_h^0 \|_{L^2(\Omega_h)}\\
		\leqslant 	\| u^0-u_h^0 \|_{L^2(\Omega_h)}
	+Ch^{k + 1 / 2}\| f - \partial_t \tilde{u} \|_{C ([0,
	T], H^{k - 1} (\Omega_h))}.
\end{multline*}

Combining this with the regularity of $f$ and $\tilde{u}$, cf. (\ref{eq:extra}), together with the bound $\|\cdot\|_{C([0,T],\cdot)}\leqslant C\|\cdot\|_{H^1(0,T;\cdot)}$ (with $C$ depending on $T$) gives the announced result .
\end{proof}

\section{Numerical experiments}

In this section, we illustrate the performance of our approach on two test cases\footnote{The experiments are executed on a laptop equipped with an Intel Core i7-12700H CPU and 32Gb of memory. Moreover, for the first test case, we use the serial default solver of \textit{FEniCS}. For the second test case, the \textit{GMRES} linear solver is used with \textit{hypre\_amg} as preconditioner.}. We have implemented $\phi$-FEM in \textit{FEniCS} \cite{fenics}, the codes of the simulations are available in the github repository 
\begin{center}\texttt{\url{https://github.com/KVuillemot/PhiFEM_Heat_Equation}}
\end{center}

In our numerical simulations, if the expected convergence is of order
$C_1h^p+C_2\Delta t^m$, we will fix $\Delta t=h^{p/m}$ in such a way we only need to observe if the error is of order $h^p$ numerically.  

\begin{remark}[Norms for the simulations]
To illustrate the convergence of the methods with the simulations, since it is numerically complex to compute the error on the exact domain $\Omega$, we will use the following formula
\begin{equation*}
    \frac{\| u_h - u_{\text{ref}} \|^2_{l^2(0,T,H^1_0(\Omega_{\text{ref}}))}}{\|u_{\text{ref}} \|^2_{l^2(0,T,H^1_0(\Omega_{\text{ref}}))}} \approx  \frac{\sum_{n=0}^N\Delta t \int_{\Omega_{\text{ref}}}|\nabla u_h(.,t_n)-\nabla u_{\text{ref}}(.,t_n)|^2 \mathrm{d}x}{\sum_{n=0}^N\Delta t\int_{\Omega_{\text{ref}}} |\nabla u_{\text{ref}}(.,t_n)|^2 \mathrm{d}x} \,, 
\end{equation*}
and
\begin{equation*}
   \frac{\| u_h - u_{\text{ref}} \|^2_{l^\infty (0,T,L^2(\Omega_{\text{ref}}))}}{\|u_{\text{ref}} \|^2_{l^\infty (0,T,L^2(\Omega_{\text{ref}}))}} \approx  \frac{\max_{n=0,\dots,N}\int_{\Omega_{\text{ref}}}( u_h(.,t_n)-u_{\text{ref}}(.,t_n))^2 \mathrm{d}x}{\max_{n=0,\dots,N}\int_{\Omega_{\text{ref}}} (u_{\text{ref}}(.,t_n))^2 \mathrm{d}x}\,,
\end{equation*}
 where $u_h$ denotes an approximation of the $L^2$-orthogonal projection of the solution on the reference mesh $\Omega_{\text{ref}}$ and $u_{\text{ref}}$ the reference solution.

\end{remark}

\paragraph{First test case : the source term is deduced from a manufactured solution and the FEM solution is compared to this manufactured solution.}

For this case, we will consider a simple smooth domain : the circle centered in $(0, 0)$, with radius $1$ as represented in Fig.~\ref{fig:domains_circles}. The level-set function is given using the equation of the circle, i.e. $ \phi(x,y) = -1 + x^2 + y^2$. Its approximation $\phi_h$ will be the interpolation of $\phi$ with $\mathbb{P}_{k+1}$ finite elements, except for Fig.~\ref{fig:degree_manufactured} (right).  
\begin{figure}
\begin{center}
\includegraphics[width=0.31\textwidth]{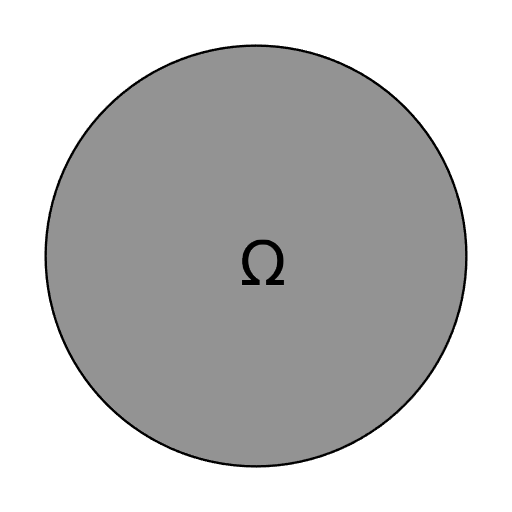}
\quad \includegraphics[width=0.31\textwidth]{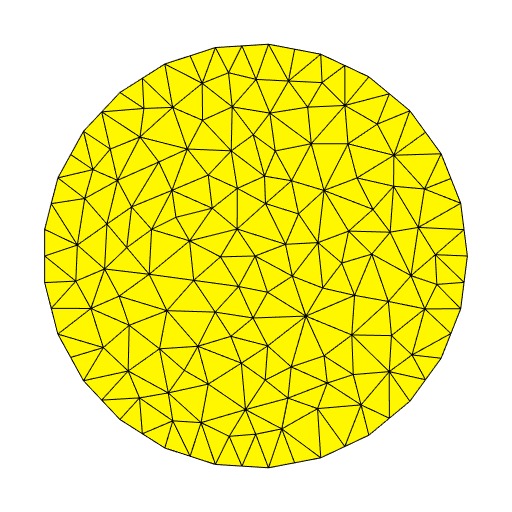}
\quad \includegraphics[width=0.31\textwidth]{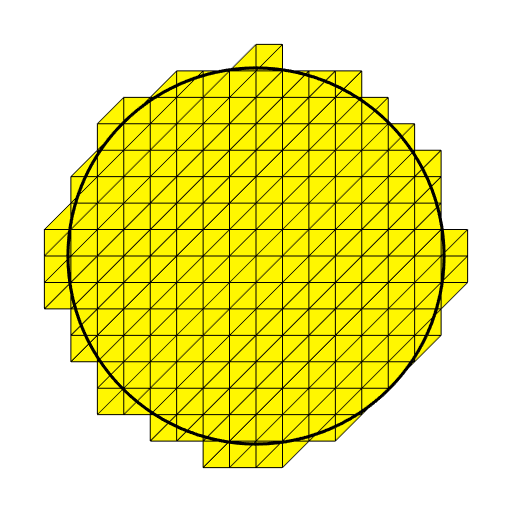}
\end{center}
\caption{Left: considered domain for the first test case. Center: a conforming mesh for the standard FEM. Right: a uniform Cartesian mesh for $\phi$-FEM.}\label{fig:domains_circles}
\end{figure}
Moreover, we consider the manufactured solution given by $u_{\text{ref}} = \cos\left(\frac{1}{2} \pi (x^2+y^2)\right) \exp(x) \sin(t)$ so that $u_{\text{ref}}$ satisfies $u_{\text{ref}}(t=0)=u_{\text{ref}}^0 = 0$ and $u_{\text{ref}} = 0$ on $\Gamma \times (0,T)$. Here, $\Omega_{\text{ref}} = \Omega_h$.  We represent the errors in $l^2(H^1)$ norm on Fig.~\ref{fig:errorH1_manufactured} and in $l^\infty(L^2)$ norm on Fig.~\ref{fig:errorL2_manufactured}, both with $\bb{P}_1$ and $\bb{P}_2$ finite elements ($k=1$ and $k=2$). Here, the numerical results fit well the theoretical convergence order of Theorem \ref{thm:error} and behaves even better since we observe a convergence of orders two and three for the $l^\infty(L^2)$ norm instead of $1.5$ and $2.5$ respectively.  We remark that the theoretical constraint $\Delta t \geqslant c h^2$ is not satisfied for the $\bb{P}^2$ finite elements but it does not affect the practical convergence. We also represent the $l^2(H^1)$ and $l^\infty(L^2)$ errors with respect to the computation time (here, the computation time is the sum of time needed to assemble the finite element matrix and to solve the finite element systems at each time step, without the time used to construct the meshes) in Fig.~\ref{fig:time_manufactured}. We observe that in this case, $\phi$-FEM is significantly faster than a standard FEM to obtain a solution with the same precision.

In Fig.~\ref{fig:sigma_circle_manufactured} (left), we represent the $l^2(H^1)$ error and in Fig.~\ref{fig:sigma_circle_manufactured} (right) the $l^\infty(L^2)$ error, both with respect to $\sigma$. This allows us to emphasize the influence of $\sigma$ on the stability of the errors and validates our choice of $\sigma=1$ in the other simulations.

Finally, in Fig.~\ref{fig:degree_manufactured}, we justify our choice for the degree of interpolation of $\phi$ since in our theoretical result, $\mathbb{P}_{k}$ is sufficient but we observe here that the error decreases for $l=2$. Furthermore, in our previous paper \cite{phifem2}, our theoretical results in the Neumann case hold true only for $l\geq k+1$. Here, since the interpolation is exact from $l=2$ we do not need to compute highest degrees of interpolation for the level-set function to compare the results.  

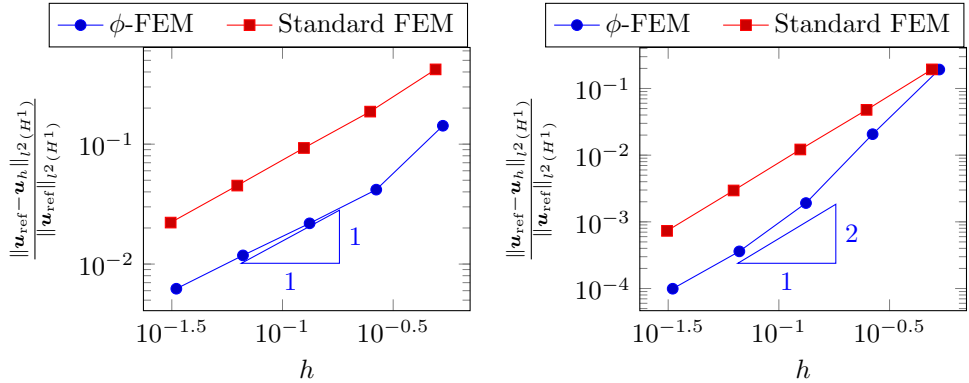
\begin{figure}
\centering
\begin{tikzpicture}\begin{loglogaxis}[
width = .4\textwidth, xlabel = $h$, 
            ylabel = $\frac{\| \mbf{u}_{\text{ref}}-\mbf{u}_h \|_{l^2(H^1)}}{\|\mbf{u}_{\text{ref}}\|_{l^2(H^1)}}$,
            legend style = { at={(-0.29,1)},anchor=south west, legend columns =2,
			/tikz/column 2/.style={column sep = 10pt}}]

\addplot coordinates { 
(0.5303300858899106,0.14240683740510918)
(0.2651650429449553,0.04174747780258757)
(0.13258252147247765,0.021866828709415306)
(0.06629126073623882,0.011811524053268598)
(0.03314563036811941,0.006228364239495186)
};
\addplot coordinates{
(0.49156075126796384,0.4203762553276892)
(0.24897191596382212,0.18688693059279907)
(0.12498810741674944,0.09295487649557944)
(0.06247559981507092,0.045148553938495414)
(0.031246983021594454,0.022180926749790872)

};
\logLogSlopeTriangle{0.6}{0.3}{0.18}{1}{blue};
\legend{ $\phi$-FEM, Standard FEM}
\end{loglogaxis}
\end{tikzpicture}
\quad
\begin{tikzpicture}\begin{loglogaxis}[width = .4\textwidth, xlabel = $h$, 
            ylabel = $\frac{\| \mbf{u}_{\text{ref}}-\mbf{u}_h \|_{l^2(H^1)}}{\|\mbf{u}_{\text{ref}}\|_{l^2(H^1)}}$,
            legend style = { at={(-0.29,1)},anchor=south west, legend columns =2,
			/tikz/column 2/.style={column sep = 10pt}}]

\addplot coordinates { 
(0.5303300858899106,0.19242111130896586)
(0.2651650429449553,0.020637993617911056)
(0.13258252147247765,0.0019127740058687732)
(0.06629126073623882,0.0003606301385867286)
(0.03314563036811941,9.924571425787079e-05)
};
\addplot coordinates{
(0.49156075126796384,0.19342215735167864)
(0.24897191596382212,0.047814836827553664)
(0.12498810741674944,0.012170787553113151)
(0.06247559981507092,0.002954356865058408)
(0.031246983021594454,0.0007320719110801302)
};
\logLogSlopeTriangle{0.6}{0.3}{0.18}{2}{blue};
\legend{$\phi$-FEM, Standard FEM}
\end{loglogaxis}
\end{tikzpicture}
\caption{First test case. $l^2(0,T;H^1(\Omega))$ relative errors with respect to $h$  with $\bb{P}_1$ elements and $\Delta t = h$ (left) and with $\bb{P}_2$ elements and $\Delta t = h^2$ (right). Standard FEM (red squares) and $\phi$-FEM (blue dots), $\sigma = 1$.
}\label{fig:errorH1_manufactured}
\end{figure}

\begin{figure}
\centering
\begin{tikzpicture}
  \begin{loglogaxis}[width = .4\textwidth, xlabel = $h$, 
            ylabel = $\frac{\max_{t_i}\| \mbf{u}_{\text{ref}}(t_i)-\mbf{u}_h(t_i) \|_{0,\Omega}}{\max_{t_i}\|\mbf{u}_{\text{ref}}(t_i)\|_{0,\Omega}}$,
            legend style = { at={(-0.29,1)},anchor=south west, legend columns =2,
			/tikz/column 2/.style={column sep = 10pt}}]

\addplot coordinates {
(0.5303300858899106,0.03404888846350651)
(0.2651650429449553,0.005214516457614594)
(0.13258252147247765,0.0011896385587341234)
(0.06629126073623882,0.0003092842758914203)
(0.03314563036811941,8.064125831383188e-05)
 };
\addplot coordinates { 
(0.49156075126796384,0.12904801203833108)
(0.24897191596382212,0.043244479786507195)
(0.12498810741674944,0.01532059254871923)
(0.06247559981507092,0.0056677124975171396)
(0.031246983021594454,0.0018969542517978922)
};
\logLogSlopeTriangle{0.63}{0.3}{0.18}{2}{blue};
\logLogSlopeTriangle{0.63}{0.3}{0.54}{1.5}{red};
\legend{$\phi$-FEM, Standard FEM} 
\end{loglogaxis}\end{tikzpicture}
\quad
\begin{tikzpicture}\begin{loglogaxis}[width = .4\textwidth, xlabel = $h$, 
            ylabel = $\frac{\max_{t_i}\| \mbf{u}_{\text{ref}}(t_i)-\mbf{u}_h(t_i) \|_{0,\Omega}}{\max_{t_i}\|\mbf{u}_{\text{ref}}(t_i)\|_{0,\Omega}}$,
            legend style = { at={(-0.29,1)},anchor=south west, legend columns =2,
			/tikz/column 2/.style={column sep = 10pt}}]

\addplot coordinates {  
(0.5303300858899106,0.01545754327255592)
(0.2651650429449553,0.001173939995100375)
(0.13258252147247765,0.00013135732144535476)
(0.06629126073623882,1.8184856807751004e-05)
};
\addplot coordinates{ 
(0.49156075126796384,0.1064849756184299)
(0.24897191596382212,0.027561681435635293)
(0.12498810741674944,0.006712943727538777)
(0.06247559981507092,0.00161909111029182)
};
\logLogSlopeTriangle{0.63}{0.3}{0.18}{3}{blue};
\logLogSlopeTriangle{0.63}{0.3}{0.58}{2}{red};
\legend{$\phi$-FEM, Standard FEM}
\end{loglogaxis}
\end{tikzpicture}

\caption{First test case. $l^{\infty}(0,T;L^2(\Omega))$ relative errors with respect to $h$  with $\bb{P}_1$ elements and $\Delta t = h^2$ (left) and with $\bb{P}_2$ elements and $\Delta t = h^3$ (right). Standard FEM (red squares) and $\phi$-FEM (blue dots), $\sigma = 1$. 
}\label{fig:errorL2_manufactured}
\end{figure}

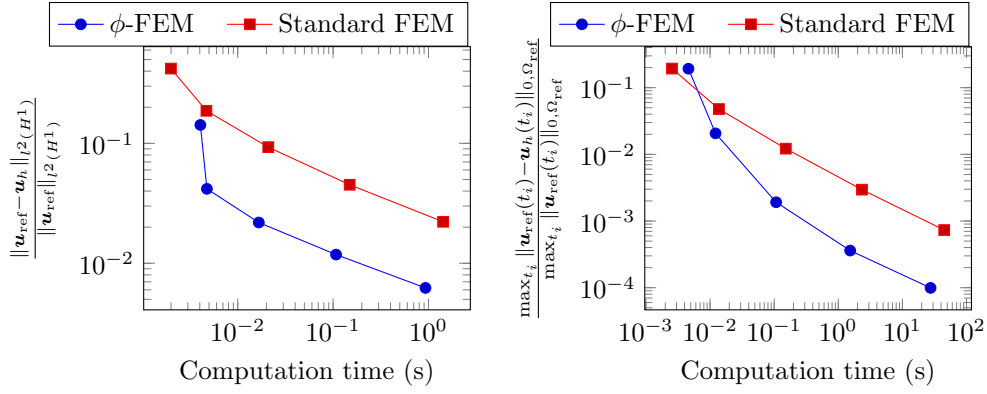
\begin{figure}
\centering
\begin{tikzpicture}\begin{loglogaxis}[width = .4\textwidth, xlabel = Computation time (s), 
            ylabel = $\frac{\| \mbf{u}_{\text{ref}}-\mbf{u}_h \|_{l^2(H^1)}}{\|\mbf{u}_{\text{ref}}\|_{l^2(H^1)}}$,
            legend style = { at={(-0.29,1)},anchor=south west, legend columns =2,
			/tikz/column 2/.style={column sep = 10pt}}]

\addplot coordinates { 
(0.004058837890625,0.14240683740510918)
(0.00475311279296875,0.04174747780258757)
(0.01662755012512207,0.021866828709415306)
(0.10751104354858398,0.011811524053268598)
(0.930293083190918,0.006228364239495186)

};
\addplot coordinates{
(0.001992464065551758,0.4203762553276892)
(0.0046977996826171875,0.18688693059279907)
(0.02091526985168457,0.09295487649557944)
(0.1492304801940918,0.045148553938495414)
(1.425015926361084,0.022180926749790872)
};
\legend{$\phi$-FEM, Standard FEM}
\end{loglogaxis}
\end{tikzpicture}
\quad
\begin{tikzpicture}\begin{loglogaxis}[width = .4\textwidth, xlabel = Computation time (s), 
            ylabel = $\frac{\max_{t_i}\| \mbf{u}_{\text{ref}}(t_i)-\mbf{u}_h(t_i) \|_{0,\Omega_{\text{ref}}}}{\max_{t_i}\|\mbf{u}_{\text{ref}}(t_i)\|_{0,\Omega_{\text{ref}}}}$,
            legend style = { at={(-0.29,1)},anchor=south west, legend columns =2,
			/tikz/column 2/.style={column sep = 10pt}}]

\addplot coordinates {  
(0.004607439041137695,0.19242111130896586)
(0.012182235717773438,0.020637993617911056)
(0.10732126235961914,0.0019127740058687732)
(1.5325732231140137,0.0003606301385867286)
(27.49291229248047,9.924571425787079e-05)
};
\addplot coordinates{ 
(0.0025506019592285156,0.19342215735167864)
(0.013753414154052734,0.047814836827553664)
(0.15184569358825684,0.012170787553113151)
(2.3403124809265137,0.002954356865058408)
(44.8926739692688,0.0007320719110801302)
};
\legend{$\phi$-FEM, Standard FEM}
\end{loglogaxis}
\end{tikzpicture}

\caption{First test case. $l^2(0,T; H^1(\Omega))$ with $\Delta t =h$ (left) and $l^{\infty}(0,T;L^2(\Omega))$ with $\Delta t =h^2$ (right) relative errors with respect to the computation time. Standard FEM (red squares) and $\phi$-FEM (blue dots), $\bb{P}_1$ elements, $\sigma = 1$. 
}\label{fig:time_manufactured}
\end{figure}

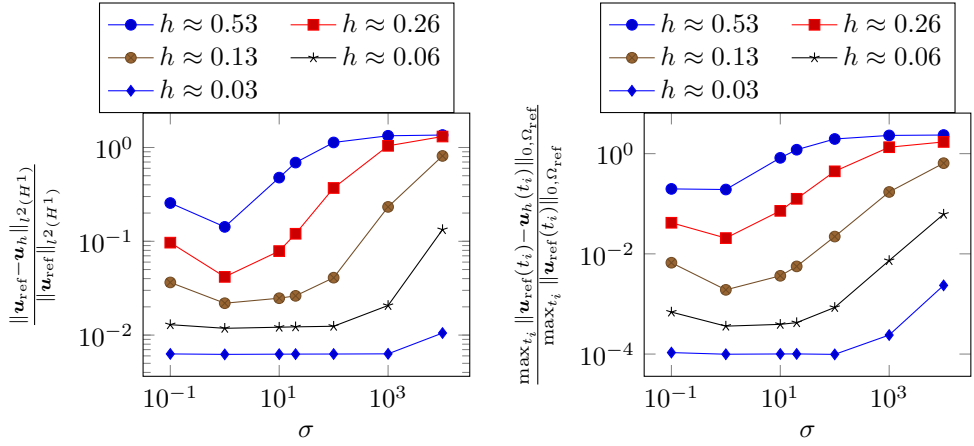
\begin{figure}
\centering
\begin{tikzpicture}
  \begin{loglogaxis}[width = .4\textwidth, xlabel = $\sigma$, 
            ylabel = $\frac{\| \mbf{u}_{\text{ref}}-\mbf{u}_h \|_{l^2(H^1)}}{\|\mbf{u}_{\text{ref}}\|_{l^2(H^1)}}$,
            legend style = { at={(-0.135,1)},anchor=south west, legend columns =2,
			/tikz/column 2/.style={column sep = 10pt}}]

 \addplot coordinates {
(0.1,0.25584897862831985)
(1.0,0.14240683740510918)
(10.0,0.4773756936433008)
(20.0,0.6904478593402711)
(100.0,1.1315683339823686)
(1000.0,1.3333268589466303)
(10000.0,1.3578057949516957)
 };
\addplot coordinates {
(0.1,0.09673181473627455)
(1.0,0.04174747780258757)
(10.0,0.07864129781684082)
(20.0,0.12031374271529559)
(100.0,0.37015559923874514)
(1000.0,1.0419085911716361)
(10000.0,1.3091311876928624)
 };
\addplot coordinates { 
(0.1,0.036416939920329246)
(1.0,0.021866828709415306)
(10.0,0.02472390538856593)
(20.0,0.026204400239461453)
(100.0,0.04092891720970238)
(1000.0,0.2326701482491822)
(10000.0,0.8127877100787192)
};
\addplot coordinates { 
(0.1,0.012913350844216891)
(1.0,0.011811524053268598)
(10.0,0.012128703840229314)
(20.0,0.012239563654455317)
(100.0,0.012420212194701494)
(1000.0,0.02067819704097055)
(10000.0,0.1331799741282276)
};
\addplot coordinates { 
(0.1,0.006297604746265964)
(1.0,0.006228364239495186)
(10.0,0.006260237281607685)
(20.0,0.0062709419844460745)
(100.0,0.00628109834156279)
(1000.0,0.006310904555306457)
(10000.0,0.01051701483733848)
};
\legend{$h \approx 0.53$, $h \approx 0.26$, $h\approx 0.13$, $h\approx 0.06$, $h\approx 0.03$}
\end{loglogaxis}\end{tikzpicture}
\quad
\begin{tikzpicture}
  \begin{loglogaxis}[width = .4\textwidth, xlabel = $\sigma$, 
            ylabel = $\frac{\max_{t_i}\| \mbf{u}_{\text{ref}}(t_i)-\mbf{u}_h(t_i) \|_{0,\Omega_{\text{ref}}}}{\max_{t_i}\|\mbf{u}_{\text{ref}}(t_i)\|_{0,\Omega_{\text{ref}}}}$,
            legend style = { at={(-0.135,1)},anchor=south west, legend columns =2,
			/tikz/column 2/.style={column sep = 10pt}}]

\addplot coordinates {
(0.1,0.19791058243580475)
(1.0,0.19242111130896586)
(10.0,0.8249155425239475)
(20.0,1.1981538554528552)
(100.0,1.9633604484220382)
(1000.0,2.3117267791352214)
(10000.0,2.3539517203430353)
 };
\addplot coordinates { 
(0.1,0.04177176852810849)
(1.0,0.020637993617911056)
(10.0,0.07223157274883185)
(20.0,0.12560192216878957)
(100.0,0.4440999255990236)
(1000.0,1.3448412493943018)
(10000.0,1.7141455948622546)
};
\addplot coordinates { 
(0.1,0.006653352634329081)
(1.0,0.0019127740058687732)
(10.0,0.003647861117098247)
(20.0,0.005621886432049845)
(100.0,0.022145342177456655)
(1000.0,0.17191400045459196)
(10000.0,0.6456966798951943)
};
\addplot coordinates { 
(0.1,0.0006885158815347893)
(1.0,0.0003606301385867286)
(10.0,0.00039157869038409333)
(20.0,0.0004232421939714597)
(100.0,0.0008522778856435858)
(1000.0,0.007350876083667884)
(10000.0,0.06184178192032475)
};
\addplot coordinates { 
(0.1,0.00010700550845324871)
(1.0,9.924571425787079e-05)
(10.0,0.00010045453326889023)
(20.0,0.00010062466826920609)
(100.0,9.858185271130041e-05)
(1000.0,0.0002396094912159772)
(10000.0,0.002350748947281737)
};
\legend{$h \approx 0.53$, $h \approx 0.26$, $h\approx 0.13$, $h\approx 0.06$, $h\approx 0.03$}
\end{loglogaxis}\end{tikzpicture}

\caption{First test case. $l^{2}(0,T;H^1(\Omega))$ relative errors with respect to $\sigma$ for different mesh sizes, with $\Delta t =h$ (left) and $l^{\infty}(0,T;L^2(\Omega))$ relative errors with respect to $\sigma$, $\Delta t =h^2$ (right), both with $\bb{P}_1$ elements. 
}\label{fig:sigma_circle_manufactured}
\end{figure}

\begin{figure}
\centering
\begin{tikzpicture}\begin{loglogaxis}[width = .4\textwidth, xlabel = $h$, 
            ylabel = $\frac{\| \mbf{u}-\mbf{u}_h \|_{l^2(H^1)}}{\|\mbf{u}\|_{l^2(H^1)}}$,
            legend style = { at={(0.16,1)},anchor=south west, legend columns =2,
			/tikz/column 2/.style={column sep = 10pt}}]

\addplot coordinates {  
(0.5303300858899106,0.2947352090285165)
(0.2651650429449553,0.174752129239339)
(0.13258252147247765,0.08717425240243092)
(0.06629126073623882,0.03975755166623604)
(0.03314563036811941,0.016833300367695342)
};
\addplot coordinates{ 
(0.5303300858899106,0.14240683740510918)
(0.2651650429449553,0.04174747780258757)
(0.13258252147247765,0.021866828709415306)
(0.06629126073623882,0.011811524053268598)
(0.03314563036811941,0.006228364239495186)
};

\logLogSlopeTriangle{0.73}{0.3}{0.23}{1}{blue};

\legend{$l=1$, $l=2$}
\end{loglogaxis}
\end{tikzpicture}
\quad
\begin{tikzpicture}\begin{loglogaxis}[width = .4\textwidth, xlabel = $h$, 
            ylabel = $\frac{\max_{t_i}\| \mbf{u}_{\text{ref}}(t_i)-\mbf{u}_h(t_i) \|_{0,\Omega_{\text{ref}}}}{\max_{t_i}\|\mbf{u}_{\text{ref}}(t_i)\|_{0,\Omega_{\text{ref}}}}$,
            legend style = { at={(0.16,1)},anchor=south west, legend columns =2,
			/tikz/column 2/.style={column sep = 10pt}}]

\addplot coordinates {  
(0.5303300858899106,0.30467190128269134)
(0.2651650429449553,0.07832786532302477)
(0.13258252147247765,0.016449943489261428)
(0.06629126073623882,0.0029208251643443425)
(0.03314563036811941,0.0005496259162340615)
};
\addplot coordinates{ 
(0.5303300858899106,0.19242111130896586)
(0.2651650429449553,0.020637993617911056)
(0.13258252147247765,0.0019127740058687732)
(0.06629126073623882,0.0003606301385867286)
(0.03314563036811941,9.924571425787079e-05)
};

\logLogSlopeTriangle{0.58}{0.3}{0.13}{2}{blue};

\legend{$l=1$, $l=2$}
\end{loglogaxis}
\end{tikzpicture}

\caption{First test case. $l^{2}(0,T;H^1(\Omega))$ relative errors with respect to $h$ for different values of $l$, $\Delta t =h$ (left) and $l^{\infty}(0,T;L^2(\Omega))$ relative errors with respect to $h$ for different values of $l$, $\Delta t =h^2$ (right), both with $\bb{P}_1$ elements and $\sigma = 1$. 
}\label{fig:degree_manufactured}
\end{figure}
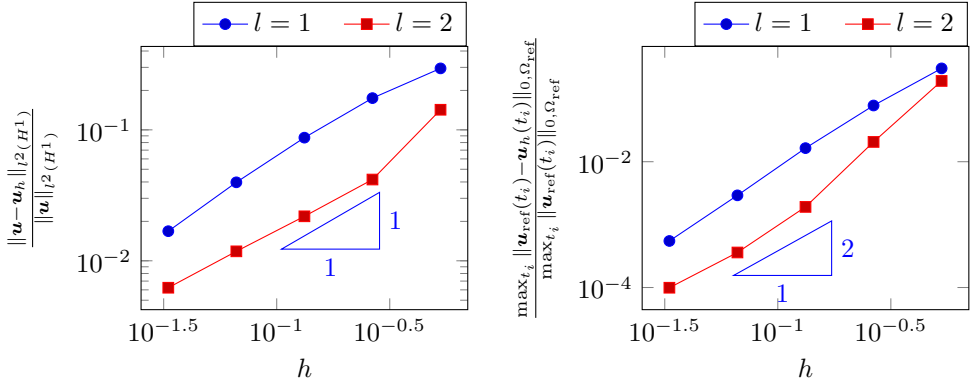

\paragraph*{Second test case : the source term is given and the FEM solution is compared to a standard FEM solution on a very fine mesh.}

We now consider a more realistic test case since we will apply some forces and consider the resulting distribution of heat in the considered domain. More precisely, this time, we impose $u = 0$ on $\Gamma \times (0,T)$, the initial condition is $u^0 = 0$ in $\Omega$ and we define a source term given by $f(x,y,z,t) = \exp\left(-\frac{(x-\mu_1)^2 + (y-\mu_2)^2 + (z-\mu_3)^2}{2\sigma_0^2}\right)$ for each $(x,y,z,t)\in \Omega\times(0,T)$, with $(\mu_1, \mu_2, \mu_3, \sigma_0) = (0.2, 0.3, -0.1, 0.3)$. The final time is fixed to $T=1$.  Moreover, for this test case, we will consider a more complex and 3D domain from \cite{cutfemrev}, given by  
\begin{equation*}
	\phi(x,y,z) = x^2 + y^2 + z^2 - r_0^2 - A \sum_{k=0}^{11} \exp\left(-\frac{(x-x_k)^2 + (y-y_k)^2 + (z-z_k)^2}{\sigma^2_0}\right)\,, 	
\end{equation*}
with 
\begin{align*}
	(x_k,y_k,z_k) &= \frac{r_0}{\sqrt{5}} \left( 2\cos\left(\frac{2k\pi}{5}\right), 2\sin\left(\frac{2k\pi}{5}\right), 1 \right)\,, \quad\hfill 0\leqslant k \leqslant 4\,, \\
	(x_k,y_k,z_k) &= \frac{r_0}{\sqrt{5}} \left( 2\cos\left(\frac{(2(k-5)-1)\pi}{5}\right), 2\sin\left(\frac{(2(k-5)-1)\pi}{5}\right), -1 \right)\,,\quad \hfill 5\leqslant k \leqslant 9\,,  \\ 
	(x_k,y_k,z_k) &= \left( 0, 0, r_0 \right)\,, \quad\hfill k = 10\,, \\
	(x_k,y_k,z_k) &= \left( 0, 0, -r_0 \right)\,, \quad\hfill k = 11\,,
\end{align*}
with $r_0=0.6$, $\sigma = 0.3$ and $A=1.5$.
The resulting domain and meshes are given in Fig.~\ref{fig:situation_second_test_case}.

\begin{figure}
\begin{center}
\includegraphics[width=0.31\textwidth]{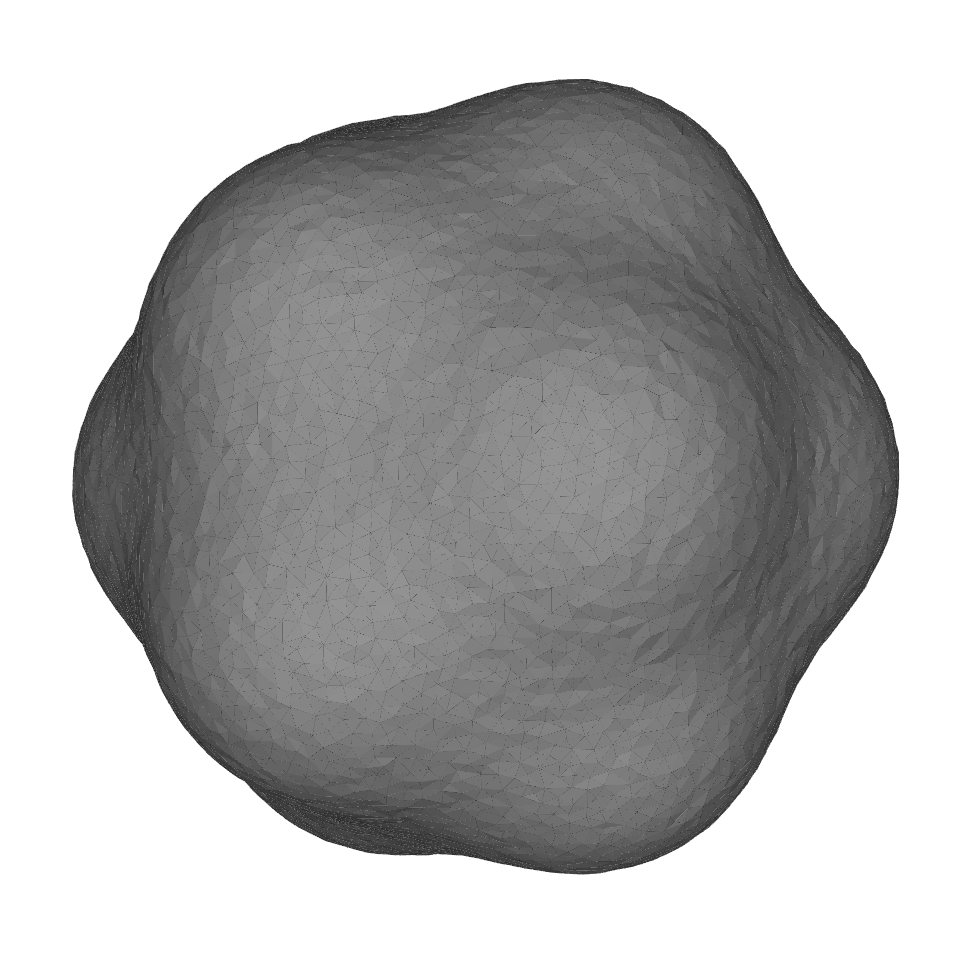}
\quad \includegraphics[width=0.31\textwidth]{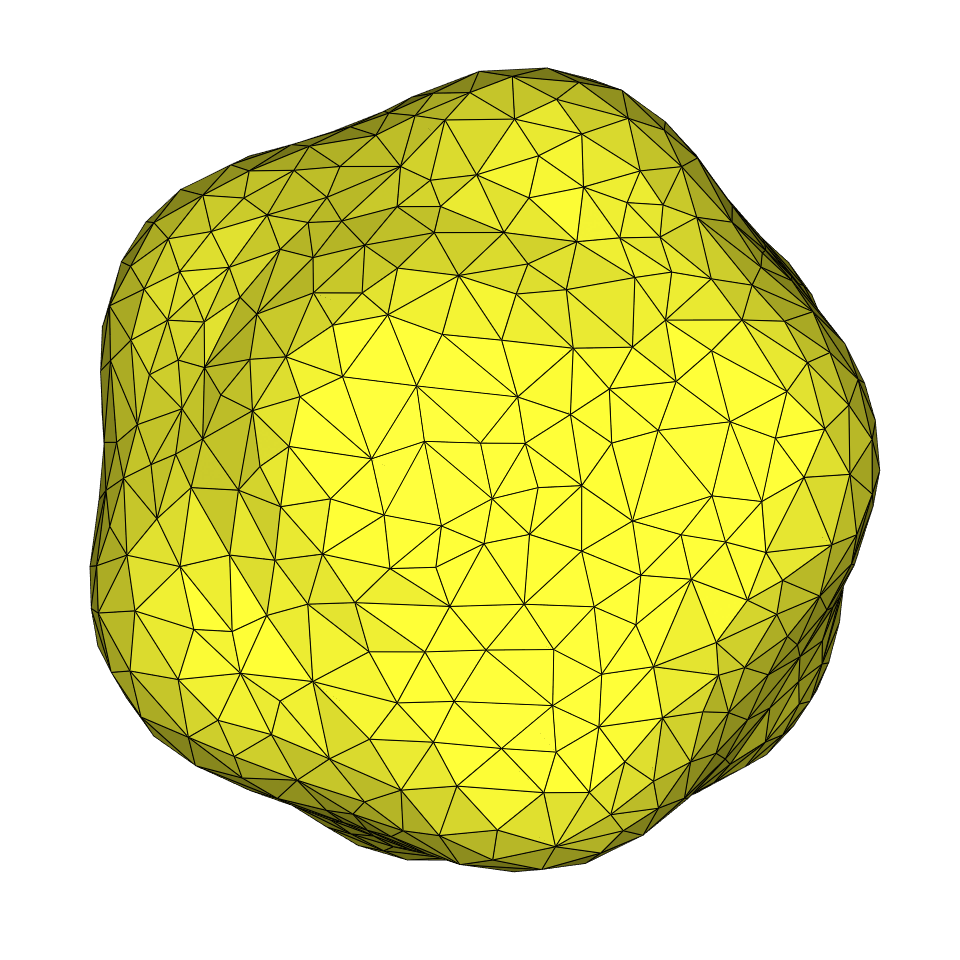}
\quad \includegraphics[width=0.31\textwidth]{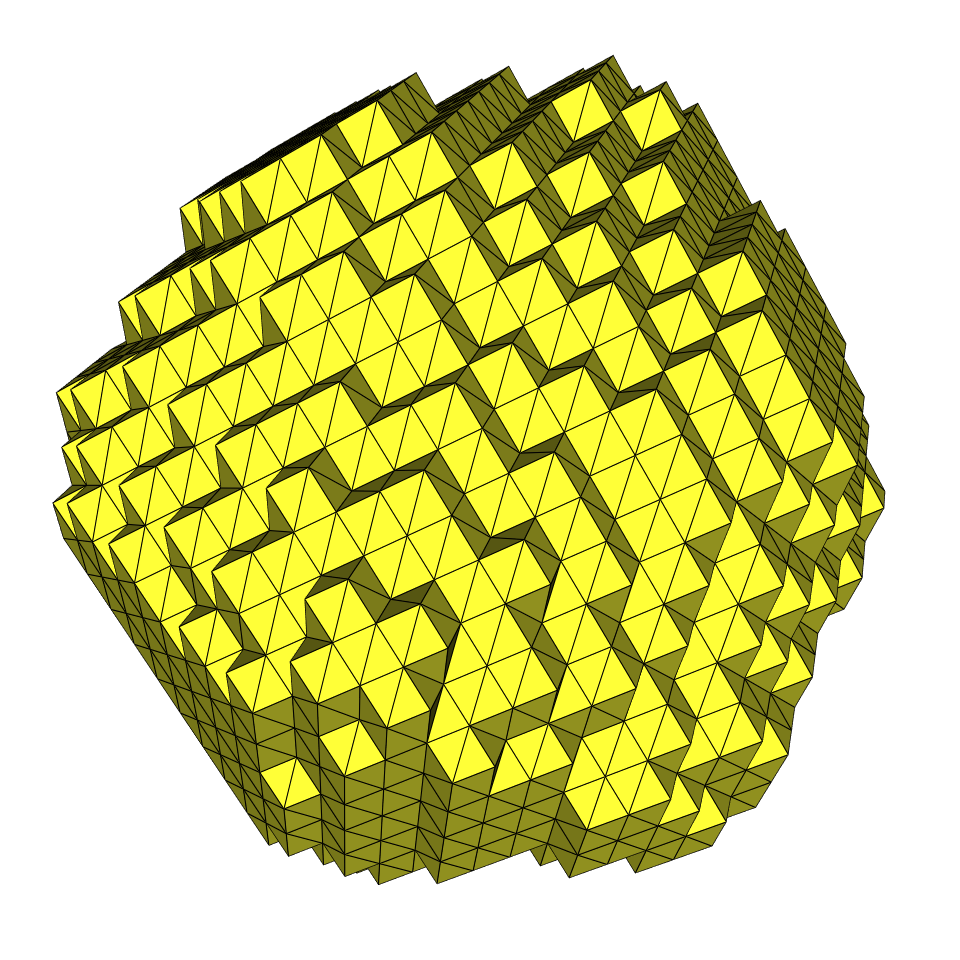}
\end{center}
\caption{Left: considered domain for the second test case. Center: a conforming mesh for the standard FEM. Right: a uniform Cartesian mesh for $\phi$-FEM.}\label{fig:situation_second_test_case}
\end{figure}
Here, $u_{\text{ref}}$ denotes the solution of a classical finite element method on $\Omega_{\text{ref}}$ that is a very fine conforming mesh. In this case, to be more precise, we introduce a partition of the interval $[0,T]$ into time steps $0=t^{\text{ref}}_0 < t^{\text{ref}}_1 < \dots < t^{\text{ref}}_{M} = T $ with $t^{\text{ref}}_n = n \Delta t^{\text{ref}}$ and $\Delta t^{\text{ref}} = h^{p/m}_{\text{ref}}$, where $h_{\text{ref}}$ denotes the size of cells of $\Omega_{\text{ref}}$. Then, in the numerical simulations each discretization is built so that $\left\{t_n\right\}_{n=0,\dots, N}$ is a subset of $\left\{t_n^{\text{ref}}\right\}_{n=0,\dots, M}$.
In Fig.~\ref{fig:error_popcorn}, we consider $\mathbb{P}_1$ finite elements ($k=1$), and $\bb{P}_2$ finite elements for the interpolation $\phi_h$ of $\phi$ ($l=2$). We compare here the $l^2(H^1)$, $l^\infty(L^2)$  relative errors between the solution of the $\phi$-FEM scheme \eqref{eq:scheme} and a standard FEM.  
The numerical results fit well the theoretical convergence order announced in Theorem~\ref{thm:error}, namely, order one for the $l^2(H^1)$ norm and order two for the $l^\infty(L^2)$ error. 

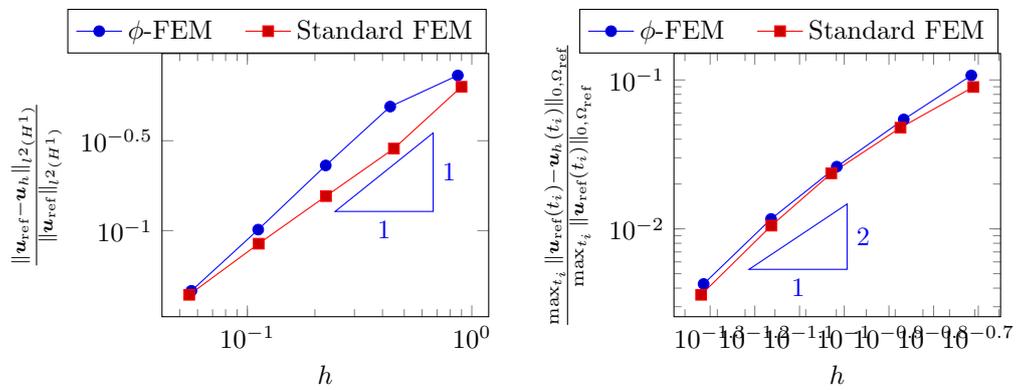
\begin{figure}
\centering
\begin{tikzpicture}\begin{loglogaxis}[
width = .4\textwidth, xlabel = $h$, 
            ylabel = $\frac{\| \mbf{u}_{\text{ref}}-\mbf{u}_h \|_{l^2(H^1)}}{\|\mbf{u}_{\text{ref}}\|_{l^2(H^1)}}$,
            legend style = { at={(-0.29,1)},anchor=south west, legend columns =2,
			/tikz/column 2/.style={column sep = 10pt}}]

\addplot coordinates { 
(0.8660254037844386,0.7299580971059874)
(0.4330127018922193,0.4898087562990987)
(0.2234904267830815,0.23042706868283183)
(0.11174521339154113,0.10129118965666067)
(0.056326855530695503,0.04626056393737572)
};
\addplot coordinates{
(0.8994499834446744,0.6313049899925751)
(0.4495668493650076,0.2860136297022238)
(0.22405010848945278,0.15561197493026127)
(0.1122919155747532,0.08457193542182453)
(0.05508233881563368,0.04396265044660608)
};
\logLogSlopeTriangle{0.83}{0.3}{0.4}{1}{blue};
\legend{ $\phi$-FEM, Standard FEM}
\end{loglogaxis}
\end{tikzpicture}
\quad
\begin{tikzpicture}
  \begin{loglogaxis}[width = .4\textwidth, xlabel = $h$, 
            ylabel = $\frac{\max_{t_i}\| \mbf{u}_{\text{ref}}(t_i)-\mbf{u}_h(t_i) \|_{0,\Omega_{\text{ref}}}}{\max_{t_i}\|\mbf{u}_{\text{ref}}(t_i)\|_{0,\Omega_{\text{ref}}}}$, 
            max space between ticks=20, 
            minor xtick = {0.040, 0.044, 0.052, 0.060,0.070, 0.082, 0.096, 0.112, 0.130, 0.150, 0.172, 0.196, 0.222, 0.240},
            legend style = { at={(-0.29,1)},anchor=south west, legend columns =2,
			/tikz/column 2/.style={column sep = 10pt}}]

\addplot coordinates {
(0.19245008972987535,0.10718930024271943)
(0.13584712216226516,0.054315577875441584)
(0.09622504486493806,0.026161006586393188)
(0.06859607158688628,0.011650327607250948)
(0.04844897363829094,0.0042667508806112035)
 };
\addplot coordinates { 
(0.19451154363361767,0.08966869479279461)
(0.13359810489587548,0.04779495037349176)
(0.09352417323530174,0.023589718371313743)
(0.06870197777822493,0.010509535315081402)
(0.04783954211973417,0.003608930181694391)
};
\logLogSlopeTriangle{0.53}{0.3}{0.18}{2}{blue};
\legend{$\phi$-FEM, Standard FEM}
\end{loglogaxis}\end{tikzpicture}
\caption{Second test case. $l^2(0,T;H^1(\Omega))$ relative errors with respect to $h$  with $\Delta t = h$ (left) and $l^\infty(0,T;L^2(\Omega))$ relative errors with respect to $h$  with $\Delta t = h^2$ (right), both with $\bb{P}_1$ elements. Standard FEM (red squares) and $\phi$-FEM (blue dots), $\sigma = 1$.
}\label{fig:error_popcorn}
\end{figure}

\section{Conclusion}

In the present work, we proposed a FEM scheme following the $\phi$-FEM paradigm to approximate the solution of the heat equation and proved its convergence, which is optimal in the $l^2(0,T;H^1(\Omega))$ norm and quasi-optimal in the $l^{\infty}(0,T;L^2(\Omega))$ norm. 
We remark that, in comparison with \cite{phifem}, we need less regularity on the exact solution in the a priori error estimates.

A first advantage of the $\phi$-FEM paradigm is its ease of implementation. Indeed, it uses standard shape functions contrary to the XFEM approach. Moreover, it uses standard integration tools contrary to cutFEM needing an integration on the real boundary and some integrations on cut cells. 

A second interesting aspect of our approach is the computational time of the simulation. 
The low cost (computational time) of $\phi$-FEM can be explained
by the fact that the boundary of the geometry in the classical finite element method is approximated by some linear functions 
while, in the $\phi$-FEM paradigm, the boundary is taken into account thanks to the level set function $\phi$, which can be of high degree without increasing the size of the finite element matrix.

In the mathematical analysis, we supposed that the boundary of the considered domain is regular enough. The case of less regular domains will be the aim of future work.

\section*{Funding}
This work was supported by the Agence Nationale de la Recherche, Project PhiFEM, under grant ANR-22-
CE46-0003-01.

\bibliographystyle{unsrt}
\bibliography{biblio}

\begin{thebibliography}{10}

\bibitem{Thomee}
Vidar Thom\'{e}e.
\newblock {\em Galerkin finite element methods for parabolic problems},
  volume~25 of {\em Springer Series in Computational Mathematics}.
\newblock Springer-Verlag, Berlin, 1997.

\bibitem{glowinski}
R.~Glowinski, T.~Pan, and J.~Periaux.
\newblock A fictitious domain method for {D}irichlet problem and applications.
\newblock {\em Computer Methods in Applied Mechanics and Engineering},
  111(3-4):283--303, 1994.

\bibitem{IBMrev}
R.~Mittal and G.~Iaccarino.
\newblock Immersed boundary methods.
\newblock {\em Annu. Rev. Fluid Mech.}, 37:239--261, 2005.

\bibitem{NitscheXFEMinterface}
C.~Annavarapu, M.~Hautefeuille, and J.~Dolbow.
\newblock A robust {N}itsche’s formulation for interface problems.
\newblock {\em Computer Methods in Applied Mechanics and Engineering},
  225-228:44--54, 2012.

\bibitem{sbm}
A.~Main and G.~Scovazzi.
\newblock The shifted boundary method for embedded domain computations. {P}art
  {I}: {P}oisson and {S}tokes problems.
\newblock {\em J. Comput. Phys.}, 372:972--995, 2018.

\bibitem{phifem}
M.~Duprez and A.~Lozinski.
\newblock {$\phi$}-{FEM}: a finite element method on domains defined by
  level-sets.
\newblock {\em SIAM J. Numer. Anal.}, 58(2):1008--1028, 2020.

\bibitem{phifem2}
Michel Duprez, Vanessa Lleras, and Alexei Lozinski.
\newblock {A new $\phi$-FEM approach for problems with natural boundary
  conditions}.
\newblock {\em Numerical Methods for Partial Differential Equations},
  39(1):281--303, 2023.

\bibitem{cotin:hal-03372733}
St{\'e}phane Cotin, Michel Duprez, Vanessa Lleras, Alexei Lozinski, and Killian
  Vuillemot.
\newblock {$\phi$-FEM: an efficient simulation tool using simple meshes for
  problems in structure mechanics and heat transfer}.
\newblock In {\em {Partition of Unity Methods (Wiley Series in Computational
  Mechanics) 1st Edition}}. {Wiley}, Nov 2022.

\bibitem{barad}
Peter Schwartz, Michael Barad, Phillip Colella, and Terry Ligocki.
\newblock A {C}artesian grid embedded boundary method for the heat equation and
  {P}oisson's equation in three dimensions.
\newblock {\em J. Comput. Phys.}, 211(2):531--550, 2006.

\bibitem{Colella}
Peter McCorquodale, Phillip Colella, and Hans Johansen.
\newblock A {C}artesian grid embedded boundary method for the heat equation on
  irregular domains.
\newblock {\em J. Comput. Phys.}, 173(2):620--635, 2001.

\bibitem{ghost}
E.~Burman.
\newblock Ghost penalty.
\newblock {\em C. R. Math. Acad. Sci. Paris}, 348(21-22):1217--1220, 2010.

\bibitem{evans2010partial}
Lawrence~C Evans.
\newblock {\em Partial differential equations}, volume~19.
\newblock American Mathematical Soc., 2010.

\bibitem{duprez:hal-03588715}
Michel Duprez, Vanessa Lleras, and Alexei Lozinski.
\newblock {$\phi$-FEM: an optimally convergent and easily implementable
  immersed boundary method for particulate flows and Stokes equations}.
\newblock accepted paper, M2AN, 2022.

\bibitem{fenics}
M~Aln{\ae}s, J~Blechta, J~Hake, A~Johansson, B~Kehlet, A~Logg, C~Richardson,
  J~Ring, ME~Rognes, and GN~Wells.
\newblock Archive of numerical software: The fenics project version 1.5.
\newblock {\em University Library Heidelberg}, 2015.

\bibitem{cutfemrev}
E.~Burman, S.~Claus, P.~Hansbo, M.~Larson, and A.~Massing.
\newblock Cutfem: discretizing geometry and partial differential equations.
\newblock {\em International Journal for Numerical Methods in Engineering},
  104(7):472--501, 2015.

\end{thebibliography}

\end{document}